\title{Boxicity and Cubicity of Product Graphs}
\date{}
\author[1]{L.~Sunil~Chandran}
\author[2]{Wilfried~Imrich}
\author[3]{Rogers~Mathew \footnote{Supported by an AARMS Postdoctoral Fellowship}}
\author[1]{Deepak~Rajendraprasad \footnote{Supported by Microsoft Research India PhD Fellowship}}
\affil[1]{
	Department of Computer Science and Automation, \authorcr 
	Indian Institute of Science, Bangalore, India - 560012. \authorcr
	\{sunil, deepakr\}@csa.iisc.ernet.in
}
\affil[2]{
	Department Mathematics and Information Technology, \authorcr
	Montanuniversit\"{a}t Leoben, Austria. \authorcr
	imrich@unileoben.ac.at 
}
\affil[3]
{
	Department of Mathematics and Statistics, \authorcr 
	Dalhousie University, 
	Halifax, Canada - B3H 3J5. \authorcr
	rogersm@mathstat.dal.ca
}
\theoremstyle{definition}
\newtheorem{definition}{Definition}
\theoremstyle{plain}
\newtheorem{theorem}{Theorem}
\newtheorem{lemma}[theorem]{Lemma}
\newtheorem{corollary}[theorem]{Corollary}
\newtheorem{observation}{Observation}
\theoremstyle{remark}
\newtheorem*{remark}{Remark}
\newtheoremstyle{tightness}% name of the style to be used
  {}% measure of space to leave above the theorem. E.g.: 3pt
  {}% measure of space to leave below the theorem. E.g.: 3pt
  {\itshape}% name of font to use in the body of the theorem
  {}% measure of space to indent
  {}% name of head font
  {.}% punctuation between head and body
  {0.5em}% space after theorem head
  {{\bfseries \thmnote{#3}}}% Manually specify head
\theoremstyle{tightness}
\newtheoremstyle{restatement}% name of the style to be used
  {}% measure of space to leave above the theorem. E.g.: 3pt
  {}% measure of space to leave below the theorem. E.g.: 3pt
  {}% name of font to use in the body of the theorem
  {}% measure of space to indent
  {}% name of head font
  {.}% punctuation between head and body
  {0.5em}% space after theorem head
  {{\bfseries \thmnote{#3}}\medskip \\{\itshape Statement}}% Manually specify head
\theoremstyle{restatement}
\newtheorem*{restatement}{Statement}
\newcommand{\floor}[1]{\lfloor #1 \rfloor}
\newcommand{\ceil}[1]{\lceil #1 \rceil}
\newcommand{\AND}{\textnormal{ and }}
\newcommand{\OR}{\textnormal{ or }}
\newcommand{\IF}{\textnormal{ if }}
\newcommand{\order}[1]{O\left( #1 \right)}
\newcommand{\orderatleast}[1]{\Omega\left( #1 \right)}
\newcommand{\orderexactly}[1]{\Theta\left( #1 \right)}
\def\half{\frac{1}{2}}
\def\into{\rightarrow}
\def\implies{\Rightarrow}
\def\R{\mathbb{R}}
\def\Z{\mathbb{Z}}
\DeclareMathOperator{\boxicity}{boxicity}
\DeclareMathOperator{\cubicity}{cubicity}
\DeclareMathOperator{\hdim}{dim}
\DeclareMathOperator{\pdim}{pdim}
\DeclareMathOperator{\cart}{\Box}
\DeclareMathOperator{\strong}{\boxtimes}
\DeclareMathOperator{\direct}{\times}
\DeclareMathOperator{\sdiff}{\triangle}
\def\calF{\mathcal{F}}
\def\calD{\mathcal{D}}
\def\calP{\mathcal{P}}
\def\Gcd{G^{\cart d}}
\def\Gsd{G^{\strong d}}
\def\Gdd{G^{\direct d}}
\begin{document}
\maketitle
%%%%%%%%%%%%%%%%%%%%%%%%%%%%%%%%%%%%%%%%%%%%%%%%%%%%%%%%%%%%%%%%%%%%%%%%%%%%%%%%
\begin{abstract}
%%%%%%%%%%%%%%%%%%%%%%%%%%%%%%%%%%%%%%%%%%%%%%%%%%%%%%%%%%%%%%%%%%%%%%%%%%%%%%%%
    
The {\em boxicity} ({\em cubicity}) of a graph $G$ is the minimum natural number $k$ such that $G$ can be represented as an intersection graph of axis-parallel rectangular boxes (axis-parallel unit cubes) in $\R^k$. In this article, we give estimates on the boxicity and the cubicity of {\em Cartesian}, {\em strong} and {\em direct products} of graphs in terms of invariants of the component graphs. In particular, we study the growth, as a function of $d$, of the boxicity and the cubicity of the $d$-th power of a graph with respect to the three products. Among others, we show a surprising result that the boxicity and the cubicity of the $d$-th Cartesian power of any given finite graph is in $\order{\log d / \log\log d}$ and $\orderexactly{d / \log d}$, respectively. On the other hand, we show that there cannot exist any sublinear bound on the growth of the boxicity of powers of a general graph with respect to strong and direct products.

\vspace{1ex} \noindent \textbf{Keywords:} intersection graphs, boxicity, cubicity, graph products, boolean lattice.
\end{abstract}

%%%%%%%%%%%%%%%%%%%%%%%%%%%%%%%%%%%%%%%%%%%%%%%%%%%%%%%%%%%%%%%%%%%%%%%%%%%%%%%%
\section{Introduction}
%%%%%%%%%%%%%%%%%%%%%%%%%%%%%%%%%%%%%%%%%%%%%%%%%%%%%%%%%%%%%%%%%%%%%%%%%%%%%%%%

Throughout this discussion, a {\em $k$-box} is the Cartesian product of $k$ closed intervals on the real line $\R$, and a {\em $k$-cube} is the Cartesian product of $k$ closed unit length intervals on $\R$. Hence both are subsets of $\R^k$ with edges parallel to one of the coordinate axes. All the graphs considered here are finite, undirected and simple. 

\begin{definition}[Boxicity, Cubicity]
\label{definitionBoxicityCubicity}
A {\em $k$-box representation} ({\em $k$-cube representation}) of a graph $G$ is a function $f$ that maps each vertex of $G$ to a $k$-box ($k$-cube) such that for any two distinct vertices $u$ and $v$ of $G$, the pair $uv$ is an edge in $G$ if and only if the boxes $f(u)$ and $f(v)$ have a non-empty intersection. The {\em boxicity} ({\em cubicity}) of a graph $G$, denoted by $\boxicity(G)$ ($\cubicity(G)$), is the smallest natural number $k$ such that $G$ has a $k$-box ($k$-cube) representation. 
\end{definition}

It follows from the above definition that complete graphs have boxicity and cubicity $0$ and interval graphs (unit interval graphs) are precisely the graphs with boxicity (cubicity) at most $1$. The concepts of boxicity and cubicity were introduced by F.S. Roberts in 1969 \cite{Roberts}. He showed that every graph on $n$ vertices has an $\floor{n/2}$-box and a $\floor{2n/3}$-cube representation. 

Given two graphs $G_1$ and $G_2$ with respective box representations $f_1$ and $f_2$, let $G$ denote the graph on the vertex set $V(G_1) \times V(G_2)$ whose box representation is a function $f$ defined by $f((v_1, v_2)) = f_1(v_1) \times f_2(v_2)$. It is not difficult to see that $G$ is the usual strong product of $G_1$ and $G_2$ (cf. Definition \ref{definitionGraphProducts}). Hence it follows that the boxicity (cubicity) of $G$ is at most the sum of the boxicities (cubicities) of $G_1$ and $G_2$. The interesting question here is: {\em can it be smaller?} We show that {\em it can be smaller} in general. But in the case when $G_1$ and $G_2$ have at least one universal vertex each, we show that that the boxicity (cubicity) of $G$ is equal to the sum of the boxicities (cubicities) of $G_1$ and $G_2$ (Theorem \ref{theoremStrongProduct}).

\begin{definition}[Graph products]
\label{definitionGraphProducts}
The {\em strong product}, the {\em Cartesian product} and the {\em direct product} of two graphs $G_1$ and $G_2$, denoted respectively by $G_1 \strong G_2$, $G_1 \cart G_2$ and $G_1 \direct G_2$, are graphs on the vertex set $V(G_1) \times V(G_2)$ with the following edge sets:
\[
\begin{array}{rcl}
E(G_1 \strong G_2) &=& \{(u_1,u_2)(v_1,v_2) : 
	(u_1 = v_1 \OR u_1v_1 \in E(G_1)) \AND \\
	& & 	(u_2 = v_2 \OR u_2v_2 \in E(G_2)) \}, \\
E(G_1 \cart G_2) &=& \{(u_1,u_2)(v_1,v_2) : 
	(u_1 = v_1, u_2v_2 \in E(G_2)) \OR \\
	& & 	(u_1v_1 \in E(G_1), u_2 = v_2) \}, \\
E(G_1 \direct G_2) &=& \{(u_1,u_2)(v_1,v_2) : 
	u_1v_1 \in E(G_1) \AND
	u_2v_2 \in E(G_2) \}. 
\end{array}
\]
The {\em $d$-th strong power}, {\em Cartesian power} and {\em direct power} of a graph $G$ with respect to each of these products, that is, the respective product of $d$ copies of $G$, are denoted by $\Gsd$, $\Gcd$ and $\Gdd$, respectively. Please refer to \cite{imrich2011handbook} to know more about graph products. 
\end{definition}

Unlike the case in strong product, the boxicity (cubicity) of the Cartesian and direct products can have a boxicity (cubicity) larger than the sum of the individual boxicities (cubicities). For example, while the complete graph on $n$ vertices $K_n$ has boxicity $0$, we show that the Cartesian product of two copies of $K_n$ has boxicity at least $\log n$ and the direct product of two copies of $K_n$ has boxicity at least $n-2$. In this note, we give estimates on boxicity and cubicity of Cartesian and direct products in terms of the boxicities (cubicities) and chromatic number of the component graphs. This answers a question raised by Douglas B. West in 2009 \cite{west2008boxicity}.  

We also study the growth, as a function of $d$, of the boxicity and the cubicity of the $d$-th power of a graph with respect to these three products. Among others, we show a surprising result that the boxicity and the cubicity of the $d$-th Cartesian power of any given finite graph is in $\order{\log d / \log\log d}$ and $\orderexactly{d / \log d}$, respectively (Corollary \ref{corollaryGrowthCartesian}). To get this result, we had to obtain non-trivial estimates on boxicity and cubicity of hypercubes and Hamming graphs and a bound on boxicity and cubicity of the Cartesian product which does not involve the sum of the boxicities or cubicities of the component graphs.

The results are summarised in the next section after a brief note on notations. The proofs and figures are moved to the appendix in the interest of space. 

\subsection{Notational note}
The vertex set and edge set of a graph $G$ are denoted, respectively, by $V(G)$ and $E(G)$. A pair of distinct vertices $u$ and $v$ is denoted at times by $uv$ instead of $\{u,v\}$ in order to avoid clutter. A vertex in a graph is {\em universal} if it is adjacent to every other vertex in the graph. If $S$ is a subset of vertices of a graph $G$, the subgraph of $G$ induced on the vertex set $S$ is denoted by $G[S]$. If $A$ and $B$ are sets, then $A \sdiff B$ denotes their symmetric difference and $A \times B$ denotes their Cartesian product. The set $\{1, \ldots, n\}$ is denoted by $[n]$. All logarithms mentioned are to the base $2$. 

%%%%%%%%%%%%%%%%%%%%%%%%%%%%%%%%%%%%%%%%%%%%%%%%%%%%%%%%%%%%%%%%%%%%%%%%%%%%%%%%
\section{Our Results} 
\label{sectionOurResults}
%%%%%%%%%%%%%%%%%%%%%%%%%%%%%%%%%%%%%%%%%%%%%%%%%%%%%%%%%%%%%%%%%%%%%%%%%%%%%%%%

%%%%%%%%%%%%%%%%%%%%%%%%%%%%%%%%%%%%%%%%%%%%%%%%%%%%%%%%%%%%%%%%%%%%%%%%%%%%%%%%
\subsection{Strong products}
%%%%%%%%%%%%%%%%%%%%%%%%%%%%%%%%%%%%%%%%%%%%%%%%%%%%%%%%%%%%%%%%%%%%%%%%%%%%%%%%

\begin{theorem}
\label{theoremStrongProduct}
Let $G_i$, $i \in [d]$, be graphs with $\boxicity(G_i) = b_i$ and $\cubicity(G_i) = c_i$. Then
\[
\begin{array}{rcccl}
\max_{i=1}^{d} b_i & \leq & 
	\boxicity(\strong_{i=1}^{d} G_i)  & \leq &
	\sum_{i=1}^{d} b_i, \AND \\ 
\max_{i=1}^{d} c_i & \leq & 
	\cubicity(\strong_{i=1}^{d} G_i)  & \leq &
	\sum_{i=1}^{d} c_i. 
\end{array}
\]
Furthermore, if each $G_i$, $i \in [d]$ has a universal vertex, then the second inequality in both the above chains is tight.
\end{theorem}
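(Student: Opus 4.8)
I will prove the two inequality chains first and then the tightness statement.

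For the lower bounds $\max_i b_i \le \boxicity(\strong_{i=1}^d G_i)$ (and similarly for cubicity), I would observe that each $G_i$ is an induced subgraph of the strong product: fixing a vertex $w_j \in V(G_j)$ for every $j \ne i$, the map $v \mapsto (w_1,\dots,w_{i-1},v,w_{i+1},\dots,w_d)$ embeds $G_i$ as an induced subgraph of $\strong_{j=1}^d G_j$, since in the strong product two such vertices are adjacent precisely when $v v' \in E(G_i)$ (all other coordinates agree). Boxicity and cubicity are monotone under taking induced subgraphs (restrict the box/cube representation), so $\boxicity(G_i) \le \boxicity(\strong_j G_j)$ for every $i$, giving the max lower bound. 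For the upper bounds $\boxicity(\strong_i G_i) \le \sum_i b_i$, I use the construction already sketched in the introduction: given a $b_i$-box representation $f_i$ of each $G_i$, define $f((v_1,\dots,v_d)) = f_1(v_1) \times \cdots \times f_d(v_d) \subseteq \R^{\sum b_i}$; two vertices are adjacent in the strong product iff in every coordinate block $i$ we have $u_i = v_i$ or $u_iv_i \in E(G_i)$, which holds iff $f_i(u_i) \cap f_i(v_i) \ne \emptyset$ for all $i$ (using that a vertex's box always intersects itself), i.e. iff the product boxes intersect. For cubicity the same construction works since a product of cubes is a cube.

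The substantive part is the tightness claim: if each $G_i$ has a universal vertex, then $\boxicity(\strong_{i=1}^d G_i) = \sum_i b_i$ and $\cubicity(\strong_{i=1}^d G_i) = \sum_i c_i$. By the upper bound it suffices to prove the matching lower bound. The idea is that the presence of a universal vertex $u_i$ in each $G_i$ lets us find, inside the strong product, an induced subgraph isomorphic to a \emph{disjoint-axes join} of the $G_i$'s whose boxicity is exactly $\sum_i b_i$. Concretely, I expect the right object is the following: it suffices to show that if $H_1$ and $H_2$ each have a universal vertex then $\boxicity(H_1 \strong H_2) \ge \boxicity(H_1) + \boxicity(H_2)$, and then iterate. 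To see this two-factor statement, note that because $H_2$ has a universal vertex $u_2$, the subgraph of $H_1 \strong H_2$ induced on $V(H_1) \times \{u_2\}$ is a copy of $H_1$ in which \emph{every} vertex is adjacent to every vertex of $V(\{u_1\}) \times V(H_2) \cong H_2$; symmetrically for the copy of $H_2$. So the induced subgraph on $(V(H_1)\times\{u_2\}) \cup (\{u_1\}\times V(H_2))$ is exactly the graph obtained from disjoint copies of $H_1$ and $H_2$ by adding all edges between them — the "join" $H_1 + H_2$ (sharing the vertex $(u_1,u_2)$, but that is harmless since it's universal in the join). The key lemma I need is therefore: $\boxicity(H_1 + H_2) = \boxicity(H_1) + \boxicity(H_2)$ for the join, and likewise for cubicity.

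**The main obstacle.** The crux is proving $\boxicity(H_1 + H_2) \ge \boxicity(H_1) + \boxicity(H_2)$. The intuition is that in any box representation of the join $H_1 + H_2$, the coordinates "used to realize non-edges of $H_1$" must be disjoint from those "used to realize non-edges of $H_2$": if $xy$ is a non-edge inside $H_1$, it is witnessed by some coordinate where the intervals of $x,y$ are disjoint, and since $x,y$ are both adjacent to every vertex of $H_2$, on that coordinate the interval of every $H_2$-vertex must meet both disjoint intervals — this pins down a lot of structure. Making this precise is the real work: I would argue that the restriction of a box representation of $H_1 + H_2$ to the coordinates that are "non-trivial on $H_1$" gives a box representation of $H_1$ (after projecting away the rest), that the analogous statement holds for $H_2$, and that these two coordinate sets are disjoint because a coordinate that separates some non-edge of $H_1$ cannot separate any non-edge of $H_2$ (the $H_2$-intervals on that coordinate are forced to be "large" — each must meet two given disjoint intervals, hence they pairwise intersect, so that coordinate contributes nothing to $H_2$). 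Summing gives $\sum_i b_i$ coordinates needed. For cubicity the same scheme applies verbatim with "cube" in place of "box", using that an induced-subgraph restriction of a cube representation is a cube representation. I would then iterate the two-factor join lemma over $i = 1,\dots,d$ (the iterated join of the $G_i$ sits as an induced subgraph of $\strong_i G_i$ by the same universal-vertex argument) to conclude.
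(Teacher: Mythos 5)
Your proposal is correct and follows essentially the same route as the paper: lower bounds via induced copies of each $G_i$, upper bounds via the product box representation, and tightness by locating the join of the $G_i$'s (through their universal vertices) as an induced subgraph and using the fact that interval graphs contain no induced $C_4$, so no single interval (or unit interval) supergraph can separate non-edges coming from two different factors. The only cosmetic difference is that you prove the two-factor case and iterate, while the paper argues over all $d$ factors at once with the sets $A_i$; your parenthetical about the shared vertex $(u_1,u_2)$ is fine because your coordinate-disjointness argument applies verbatim to the identified join.
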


If we consider the strong product of a $4$-cycle $C_4$ with a path on $3$ vertices $P_3$, we get an example where the upper bound in Theorem \ref{theoremStrongProduct} is not tight. 
Theorem \ref{theoremStrongProduct} has the following interesting corollary.

\begin{corollary}
\label{corollaryGrowthStrong}
For any given graph $G$, $boxicity(\Gsd)$ and $\cubicity(\Gsd)$ are in $\order{d}$ and there exist graphs for which they are in $\orderatleast{d}$.
\end{corollary}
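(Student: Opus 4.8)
The plan is to derive Corollary \ref{corollaryGrowthStrong} directly from Theorem \ref{theoremStrongProduct} applied to the special case $G_1 = \cdots = G_d = G$. The upper bound is immediate: writing $b = \boxicity(G)$ and $c = \cubicity(G)$, the second inequality in each chain of Theorem \ref{theoremStrongProduct} gives $\boxicity(\Gsd) \leq \sum_{i=1}^{d} b = bd$ and $\cubicity(\Gsd) \leq cd$. Since $b$ and $c$ are constants depending only on the fixed graph $G$, both quantities are in $\order{d}$.

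For the lower bound, I would exhibit a concrete family of graphs whose powers have boxicity (and cubicity) growing linearly in $d$. The natural choice is to take $G$ with $\boxicity(G) \geq 1$, for instance $G = C_4$ (the $4$-cycle), which has boxicity $2$, or more simply any graph that is not an interval graph. To invoke Theorem \ref{theoremStrongProduct} for the lower bound I would instead use the \emph{furthermore} clause: pick $G$ to have a universal vertex \emph{and} positive boxicity/cubicity. For example, let $G$ be the graph obtained from $C_4$ by adding a universal vertex (a wheel-like graph); this graph still has boxicity $\geq 2$ since it contains $C_4$ as an induced subgraph, and it has a universal vertex by construction, so the second inequality is tight and $\boxicity(\Gsd) = bd \in \orderatleast{d}$. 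The same construction handles cubicity, noting $\cubicity(G) \geq \boxicity(G) \geq 2 > 0$.

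The only subtlety — and the part that warrants a moment's care rather than a genuine obstacle — is checking that a graph with a universal vertex can simultaneously have positive boxicity and positive cubicity. Adding a universal vertex to a graph $H$ does not decrease its boxicity below $\boxicity(H)$ minus a small constant; in fact, since $C_4$ remains an induced subgraph and boxicity is monotone under induced subgraphs, the candidate graph has boxicity at least $\boxicity(C_4) = 2$. Hence the furthermore clause of Theorem \ref{theoremStrongProduct} applies and yields exact linear growth. Assembling these two bounds gives the statement: $\boxicity(\Gsd), \cubicity(\Gsd) \in \order{d}$ for every $G$, and $\in \orderatleast{d}$ for the exhibited family. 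I do not expect any real difficulty here; the content is entirely in Theorem \ref{theoremStrongProduct}, and this corollary is just its specialization to powers plus one well-chosen example witnessing tightness.
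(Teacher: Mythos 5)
Your proposal is correct and follows the same route the paper intends: the $\order{d}$ upper bound is the subadditivity in Theorem \ref{theoremStrongProduct}, and the $\orderatleast{d}$ lower bound comes from the tightness (``furthermore'') clause applied to a graph with a universal vertex and positive boxicity, such as $C_4$ with a universal vertex added. The paper leaves the corollary unproved as an immediate consequence of Theorem \ref{theoremStrongProduct}; your write-up just supplies the explicit witnessing example, and your induced-subgraph argument that this example has boxicity at least $2$ is sound.
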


%%%%%%%%%%%%%%%%%%%%%%%%%%%%%%%%%%%%%%%%%%%%%%%%%%%%%%%%%%%%%%%%%%%%%%%%%%%%%%%%
\subsection{Cartesian products}
%%%%%%%%%%%%%%%%%%%%%%%%%%%%%%%%%%%%%%%%%%%%%%%%%%%%%%%%%%%%%%%%%%%%%%%%%%%%%%%%

We show two different upper bounds on the boxicity and cubicity of Cartesian products. The first and the easier result bounds from above the boxicity (cubicity) of a Cartesian product in terms of the boxicity (cubicity) of the corresponding strong product and the boxicity (cubicity) of a Hamming graph whose size is determined by the chromatic number of the component graphs. The second bound is in terms of the maximum cubicity among the component graphs and the boxicity (cubicity) of a Hamming graph whose size is determined by the sizes of the component graphs. The second bound is much more useful to study the growth of boxicity and cubicity of higher Cartesian powers since the first term remains a constant.
 
\begin{theorem}
\label{theoremCartesianStrong}
For graphs $G_1, \ldots, G_d$, 
\begin{eqnarray*}
\boxicity(\cart_{i=1}^d G_i) & \leq & 
	\boxicity(\strong_{i=1}^d G_i) + 
	\boxicity(\cart_{i=1}^d K_{\chi_i})
	\textnormal{ and} \\
\cubicity(\cart_{i=1}^d G_i) & \leq & 
	\cubicity(\strong_{i=1}^d G_i) + 
	\cubicity(\cart_{i=1}^d K_{\chi_i})
\end{eqnarray*}
where $\chi_i$ denotes the chromatic number of $G_i, i \in [d]$.
\end{theorem}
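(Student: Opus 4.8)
The plan is to build a box (cube) representation of $\cart_{i=1}^d G_i$ by superimposing two ingredients: a representation that takes care of all edges of the strong product, and a representation that ``kills'' exactly those pairs that are adjacent in the strong product but not in the Cartesian product. The key observation is that, by definition, $(u_1,\dots,u_d)(v_1,\dots,v_d)$ is an edge of $\cart_{i=1}^d G_i$ if and only if it is an edge of $\strong_{i=1}^d G_i$ \emph{and} the two tuples differ in exactly one coordinate. So if we take a box representation $g$ of $\strong_{i=1}^d G_i$ realising its boxicity, it already makes adjacent every Cartesian edge (since Cartesian edges are strong edges) and makes non-adjacent every non-strong-edge; the only extraneous adjacencies are the strong edges in which the tuples differ in two or more coordinates. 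We must intersect the boxes $g(u)$ with additional boxes coming from a second representation whose only job is to separate precisely those ``bad'' pairs.

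For the second ingredient, colour each $G_i$ properly with $\chi_i$ colours; this gives a homomorphism $\phi_i : V(G_i) \to V(K_{\chi_i})$, and hence a homomorphism $\phi = (\phi_1,\dots,\phi_d) : \cart_{i=1}^d G_i \to \cart_{i=1}^d K_{\chi_i}$ (the Hamming graph), because a Cartesian edge differing in coordinate $i$ maps to a Cartesian edge of the Hamming graph differing in coordinate $i$. Now take a box representation $h'$ of $\cart_{i=1}^d K_{\chi_i}$ realising its boxicity and pull it back: set $h(v) = h'(\phi(v))$. I claim the product (intersection) representation $f(v) = g(v) \times h(v)$ works. Adjacency in $f$ means adjacency in both $g$ and $h$. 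A true Cartesian edge of $\cart_{i=1}^d G_i$ is a strong edge, so the $g$-parts intersect; and it maps under $\phi$ to a Hamming edge (the colours in the differing coordinate are distinct since $\phi_i$ is proper), so the $h$-parts intersect. Conversely, suppose the $f$-boxes of $u$ and $v$ intersect. Then the $g$-parts intersect, so $uv$ is a strong edge, hence $u$ and $v$ agree in some set of coordinates and are adjacent-in-$G_i$ in the rest; and the $h$-parts intersect, so $\phi(u)$ and $\phi(v)$ are equal or adjacent in the Hamming graph, i.e.\ they differ in at most one coordinate. If $u$ and $v$ differed (in $G_i$) in two or more coordinates, then in each such coordinate the proper colourings give $\phi_i(u_i) \ne \phi_i(v_i)$, so $\phi(u)$ and $\phi(v)$ would differ in two or more coordinates, contradicting the Hamming non-adjacency. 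Hence $u$ and $v$ differ in exactly one coordinate and are $G_i$-adjacent there, i.e.\ $uv$ is a Cartesian edge. This proves $f$ is a valid box representation using $\boxicity(\strong_{i=1}^d G_i) + \boxicity(\cart_{i=1}^d K_{\chi_i})$ coordinates.

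The cubicity statement is identical: taking cube representations for both ingredients yields a cube representation of the Cartesian product in the same number of coordinates, since a Cartesian product of cubes is a cube and the pullback $h(v) = h'(\phi(v))$ of a cube is a cube. The only point requiring a little care — and the part I would write out carefully — is the ``conversely'' direction above: one must check that a strong edge whose tuples differ in exactly one coordinate is genuinely a Cartesian edge (immediate from the definition, since agreeing in all but one coordinate and being $G_i$-adjacent in the remaining one is exactly the definition of $E(\cart)$), and that the homomorphism $\phi$ does not accidentally collapse a coordinate where $u$ and $v$ differ (this is exactly where properness of the colouring is used). I do not foresee a genuine obstacle here; the argument is a clean superposition, and the real content of the paper lies in subsequently bounding $\boxicity(\cart_{i=1}^d K_{\chi_i})$, which this theorem reduces the problem to.
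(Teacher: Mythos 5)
Your proposal is correct and is essentially the paper's own proof: the paper defines $f((v_1,\ldots,v_d)) = f_s((v_1,\ldots,v_d)) \times f_{\chi}((c_1(v_1),\ldots,c_d(v_d)))$, i.e.\ exactly your superposition of a box representation of the strong product with the pullback, via proper colourings, of a box representation of $\cart_{i=1}^d K_{\chi_i}$. The verification you spell out (a strong edge differing in $\geq 2$ coordinates maps under the proper colourings to a Hamming non-edge) is precisely what the paper leaves as ``easy to see.''
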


When $G_i = K_q$ for every $i \in [d]$, $G = \strong_{i=1}^d G_i$ is a complete graph on $q^d$ vertices and hence has boxicity and cubicity $0$. In this case it is easy to see that both the bounds in Theorem \ref{theoremCartesianStrong} are tight.

\begin{theorem}
\label{theoremMaxCubicity}
For graphs $G_1, \ldots, G_d$, with $|V(G_i)| = q_i$ and $\cubicity(G_i) = c_i$, for each $i \in [d]$, 
\[
\begin{array}{rcl}
\boxicity(\cart_{i=1}^d G_i)  & \leq &	
	\max_{i \in [d]} c_i  + \boxicity(\cart_{i=1}^d K_{q_i}), 
	\AND \\
\cubicity(\cart_{i=1}^d G_i)  & \leq &	
	\max_{i \in [d]} c_i  + \cubicity(\cart_{i=1}^d K_{q_i}).
\end{array}
\]
\end{theorem}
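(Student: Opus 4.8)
The plan is to factor the Cartesian product representation into two groups of coordinates: one group that resolves all edges between vertices whose first coordinates lie in the same factor-graph fibre, and a second group that resolves the ``complete Hamming skeleton'' of the product. I will build the box (cube) representation of $\cart_{i=1}^d G_i$ by combining a representation that encodes, roughly, ``in which copy of $G_j$ am I, and is my position within it adjacent,'' together with a box representation of the Hamming graph $\cart_{i=1}^d K_{q_i}$. The key structural observation is that an edge of $\cart_{i=1}^d G_i$ between $(u_1,\dots,u_d)$ and $(v_1,\dots,v_d)$ forces $u_i = v_i$ for all but exactly one index, say $j$, and $u_j v_j \in E(G_j)$; in particular the two vertices are adjacent in $\cart_{i=1}^d K_{q_i}$, and they agree everywhere except in coordinate $j$.

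First I would fix, for each $i\in[d]$, a $c_i$-cube representation $g_i$ of $G_i$, and extend each of them to a common dimension $c = \max_i c_i$ by padding with the unit interval $[0,1]$ in the extra coordinates (padding preserves the property of being a cube representation, since a vertex of $G_i$ is never "missing" an intersection in a padded coordinate). Now for each of these $c$ coordinates $k \in [c]$, I would define a single interval (cube) assignment on $V(\cart_{i=1}^d G_i)$ as follows: to the vertex $(u_1,\dots,u_d)$ assign in coordinate $k$ an interval that depends on $u_j$ — but I must be careful, because different vertices carry different ``active'' coordinates $j$. The cleanest way is: partition $\R$ into $d$ disjoint unit-length buckets (or, for boxicity, $d$ disjoint arbitrarily-long buckets), one for each factor index, so that the $k$-th interval of the vertex $(u_1,\dots,u_d)$ is the union over $i\in[d]$ of (bucket $i$) $+$ ($k$-th interval of $g_i(u_i)$) appropriately scaled into that bucket. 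Then two vertices $(u_1,\dots,u_d)$ and $(v_1,\dots,v_d)$ have overlapping boxes in all $c$ of these coordinates if and only if, for every coordinate $k$, there is some bucket $i$ in which $g_i(u_i)$ meets $g_i(v_i)$ — which, taking the intersection over all $k$, happens exactly when there is an $i$ with $u_i v_i \in E(G_i)$ or $u_i = v_i$, i.e. when the two vertices are adjacent in the \emph{strong} product restricted to a single nontrivial factor. Composing with a $\boxicity(\cart_{i=1}^d K_{q_i})$-box representation of the Hamming graph (which forces ``agree on all but one coordinate'') then carves out exactly the Cartesian-product edges, and the two parts together use at most $\max_i c_i + \boxicity(\cart_{i=1}^d K_{q_i})$ coordinates; the cube version is identical with "box" replaced by "cube" throughout, using unit buckets.

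The main obstacle I anticipate is the bookkeeping in the bucket construction: I need to verify that a \emph{non}-edge of the product that nonetheless agrees on all but one coordinate is correctly separated, and that a non-edge that differs in two or more coordinates is already separated by the Hamming part even though it might accidentally overlap in the first block of coordinates — this second case is fine precisely because intersection across blocks is an AND. I also need to check the degenerate case where $u_i = v_i$ for \emph{all} $i$ (same vertex) does not spuriously create problems, and that the padding-to-$\max_i c_i$ step does not destroy any non-adjacency. I expect the only genuinely delicate point is making the single block of $c$ coordinates simultaneously work for all choices of the active index $j$, which the disjoint-bucket trick handles, but writing it so the "if and only if" is transparent will take some care. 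Once that block is in place, the rest is an application of the standard fact that overlaying box representations on disjoint coordinate sets takes the intersection (equivalently, $\boxicity(G) \le \boxicity(H_1) + \boxicity(H_2)$ whenever $E(G) = E(H_1)\cap E(H_2)$ on a common vertex set).
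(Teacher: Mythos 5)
There is a genuine gap, and it sits exactly at the point you flagged as ``the only genuinely delicate point.'' Your bucket construction assigns to each vertex, in each of the $c$ coordinates, the union of $d$ pieces lying in $d$ pairwise disjoint buckets; such a union is not an interval, so the construction is not a box (or cube) representation at all. But even if one tolerated multi-interval representations, the semantics would be: the coordinate-$k$ sets of $x=(u_1,\dots,u_d)$ and $y=(v_1,\dots,v_d)$ meet iff \emph{there exists} a bucket $i$ in which the $k$-th intervals of $g_i(u_i)$ and $g_i(v_i)$ meet. Now take a \emph{layer non-edge}: $x$ and $y$ differing only in position $j$ with $u_jv_j\notin E(G_j)$. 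They agree in the other $d-1\ge 1$ positions, so every bucket $i\ne j$ supplies an intersection in every coordinate $k$; hence $x$ and $y$ are adjacent in your first block. They are also adjacent in $\cart_{i=1}^d K_{q_i}$ (they differ in exactly one position), so the final intersection wrongly declares them adjacent. The Hamming factor can only kill pairs differing in two or more positions; your first block must therefore separate layer non-edges, and it cannot. (Separately, the step ``taking the intersection over all $k$'' converts ``for every $k$ there is a bucket $i$'' into ``there is a bucket $i$ that works for every $k$,'' which is an invalid quantifier exchange, though this is moot given the failure above.)

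The paper's proof repairs precisely this point with a different device: identify a $c$-cube with its origin, so that a cube representation of $H$ in $\R^c$ is a map $f:V(H)\into\R^c$ with $\{u,v\}\in E(H)$ iff $d_{\infty}(f(u),f(v))\le 1$, and define $F((v_1,\dots,v_d))=f_1(v_1)+\cdots+f_d(v_d)$ as the coordinatewise \emph{sum} of cube embeddings $f_i:V(G_i)\into\R^c$ (padded to the common dimension $c=\max_i c_i$, as you do). For a pair differing only in position $j$ the common summands cancel, so $d_{\infty}(F(x),F(y))=d_{\infty}(f_j(x(j)),f_j(y(j)))$, which is $\le 1$ exactly when $x(j)y(j)\in E(G_j)$. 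Thus the block graph $H$ defined by $F$ agrees with $\cart_{i=1}^d G_i$ on every pair differing in exactly one position (edges and layer non-edges alike), while cross non-edges are removed by $\cart_{i=1}^d K_{q_i}$, giving $G = H\cap K$ and the claimed bounds via the subadditivity of boxicity and cubicity under intersection. If you replace your bucket gadget by this summation gadget, the rest of your outline (padding, splitting non-edges into the one-position and many-position cases, and adding the Hamming term) goes through as you describe.
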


In wake of the two results above, it becomes important to have a good upper bound on the boxicity and the cubicity of Hamming graphs. The {\em Hamming graph $K_q^d$} is the Cartesian product of $d$ copies of a complete graph on $q$ vertices. We call the $K_2^d$ the {\em $d$-dimensional hypercube}.

The cubicity of hypercubes is known to be in  $\orderexactly{ \frac{d}{\log d} }$. The lower bound is due to Chandran, Mannino and Oriolo \cite{CMO} and the upper bound is due to Chandran and Sivadasan \cite{CN99}. But we do not have such tight estimates on the boxicity of hypercubes. The only explicitly known upper bound is one of $\order{d / \log d}$ which follows from the bound on cubicity since boxicity is bounded above by cubicity for all graphs. The only non-trivial lower bound is one of $\half (\ceil{\log\log d} + 1)$ due to Chandran, Mathew and Sivadasan \cite{RogSunSiv}. 

We make use of a non-trivial upper bound shown by Kostochka on the dimension of the partially ordered set (poset) formed by two neighbouring levels of a Boolean lattice \cite{kostochka1997dimension} and a connection between boxicity and poset dimension established by Adiga, Bhowmick and Chandran in \cite{DiptAdiga} to obtain the following result.

\begin{theorem}
\label{theoremHypercubeBooleanLattice}
Let $b_d$ be the largest dimension possible of a poset formed by two adjacent levels of a Boolean lattice over a universe of $d$ elements. Then
\[
\begin{array}{rcccl}
\half b_d &\leq& 
	\boxicity (K_2^d) &\leq& 
		3b_d.
% 		& \textnormal{ and hence}
% 		\medskip \\
% \frac{ \ceil{ \log \log d } + 1 }{2} &\leq&
% 	\boxicity(K_2^d)  &\leq& 
% 	12 \log d / \log \log d. &
\end{array}
\]
Furthermore, $\boxicity(K_2^d) \leq 12 \log d / \log\log d$.
\end{theorem}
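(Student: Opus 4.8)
The plan is to prove Theorem \ref{theoremHypercubeBooleanLattice} in three stages: first the two ``sandwich'' inequalities $\half b_d \le \boxicity(K_2^d) \le 3b_d$, and then the explicit bound $\boxicity(K_2^d) \le 12 \log d / \log\log d$ as a corollary of the upper half together with Kostochka's estimate on $b_d$. The conceptual link is the Adiga--Bhowmick--Chandran correspondence between boxicity and poset dimension \cite{DiptAdiga}: if $\calP$ is a poset then, denoting by $G_{\calP}$ its comparability (or incomparability) graph, one has $\pdim(\calP)/2 \le \boxicity(G_{\calP}) \le \pdim(\calP)$, and conversely any graph $G$ on $n$ vertices can be associated with a poset (essentially the bipartite ``split'' poset on $V \sqcup V$) whose dimension is within a constant factor of $\boxicity(G)$. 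The key observation is that the hypercube $K_2^d$ is exactly the graph whose vertex set is $2^{[d]}$ with $S \sim T$ iff $|S \sdiff T| = 1$, so its structure is governed by adjacent levels of the Boolean lattice $B_d$.

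For the lower bound $\half b_d \le \boxicity(K_2^d)$: let $L_k, L_{k+1}$ be two adjacent levels of $B_d$ achieving the maximum dimension $b_d$, and let $\calP$ be the induced subposet. I would show that the comparability graph of $\calP$ embeds as an induced subgraph of (a bounded number of disjoint copies / a suitable blow-up related to) $K_2^d$ — more precisely, that $\boxicity$ of the relevant graph is at least $\pdim(\calP)/2 = b_d/2$, and that this graph is an induced subgraph of $K_2^d$ so that $\boxicity(K_2^d) \ge \boxicity$ of the subgraph by the standard monotonicity of boxicity under induced subgraphs. The crux is exhibiting the induced-subgraph relationship cleanly: since consecutive levels of $B_d$ form a bipartite graph and $K_2^d$ itself is bipartite with edges between consecutive levels, the bipartite double-cover / comparability picture should match up directly, giving $\half b_d \le \boxicity(K_2^d)$.

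For the upper bound $\boxicity(K_2^d) \le 3b_d$: I would build a box representation of $K_2^d$ level by level. Partition the vertices $2^{[d]}$ by the parity of $|S|$; edges of $K_2^d$ only run between a level $L_j$ and $L_{j+1}$. For each pair of adjacent levels, use the $\pdim \le b_d$ bound (Kostochka) to get a $b_d$-box representation of that bipartite piece, then argue that a constant number (here $3$) of such ``layered'' representations, combined with a few extra coordinates to separate non-adjacent same-parity vertices and non-adjacent vertices from far-apart levels, suffice to represent all of $K_2^d$. The factor $3$ presumably comes from needing one block of coordinates for ``even-to-next-odd'' adjacencies, one for ``odd-to-next-even,'' and one (or the overlap in the Adiga--Bhowmick--Chandran translation, which loses a factor $2$ one way) to glue things consistently; I would track this carefully. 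The main obstacle I anticipate is precisely this gluing: ensuring that the box representation handling adjacencies between $L_j$ and $L_{j+1}$ does not accidentally create a spurious intersection between a vertex of $L_j$ and a vertex of $L_{j+2}$ or a non-neighbour within $L_{j+1}$; this requires an auxiliary ``level coordinate'' (an interval representation of the path $P_{d+1}$ on the levels, which has boxicity $1$) and a careful check that it can be folded into the constant factor.

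Finally, the explicit estimate follows by substitution: Kostochka \cite{kostochka1997dimension} shows $b_d = \order{\log d / \log\log d}$, and chasing his constant through the inequality $\boxicity(K_2^d) \le 3b_d$ yields $\boxicity(K_2^d) \le 12 \log d / \log\log d$ for all sufficiently large $d$ (and one checks small $d$ directly or absorbs them into the constant). I would state Kostochka's bound with its explicit constant, multiply by $3$, and simplify; this last step is routine once the constant in \cite{kostochka1997dimension} is pinned down.
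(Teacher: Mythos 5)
Your overall framework is the right one---Theorem \ref{theoremBoxicityPoset} plus Kostochka---and your lower bound is essentially the paper's: the comparability graph of two adjacent levels \emph{is} the induced subgraph $B_d(j-1,j)$ of $K_2^d$, so Theorem \ref{theoremBoxicityPoset} and Observation \ref{observationInducedSubgraph} give $\half b_d \le \boxicity(K_2^d)$ directly; no ``blow-up'' or double cover is needed. The gap is in the upper bound. You split the levels by \emph{parity} and devote one block to even-to-odd adjacencies, one to odd-to-even, and one ``level coordinate.'' But each block must individually be a supergraph of all of $K_2^d$. The block that is supposed to realise the non-edges inside $L_{2i}\cup L_{2i+1}$ for every $i$ must therefore also contain every edge of $K_2^d$ running between $L_{2i+1}$ and $L_{2i+2}$; the natural way to add these (make all cross-piece pairs adjacent) turns that block into the join of about $d/2$ bipartite pieces, and by Observation \ref{observationJoin} the boxicity of a join is the \emph{sum} of the boxicities of its parts, i.e.\ $\Theta(d\,b_d)$ rather than $b_d$. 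The auxiliary path coordinate cannot repair this: it only kills pairs whose levels differ by at least $2$, while the blow-up already happens inside the single even-odd supergraph. So your accounting for the factor $3$ (``even-to-odd, odd-to-even, glue'') does not go through as stated.

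The paper's fix is to colour the levels mod $3$ instead of mod $2$. With $V_k=\{v: h(v)\equiv k \bmod 3\}$, it sets $H_k = H[V_{k+1}\cup V_{k+2}]\otimes V_k$: the third residue class is made universal, which keeps $H_k$ a supergraph of $K_2^d$, while $H[V_{k+1}\cup V_{k+2}]$ is a \emph{disjoint union} (not a join) of the bipartite graphs $B_d(j-1,j)$ with $j\equiv k+2 \pmod 3$, because the deleted residue class separates consecutive pieces. Hence $\boxicity(H_k)\le b_d$ by Theorem \ref{theoremBoxicityPoset} and Observations \ref{observationDisjointUnion} and \ref{observationUniversalVertices}. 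For any non-edge $\{u,v\}$ one picks $k\in\Z_3\setminus\{h(u)\bmod 3,\, h(v)\bmod 3\}$; then both vertices lie in the induced part of $H_k$ and remain non-adjacent. This single device handles same-level, adjacent-level and far-level non-edges simultaneously, with no extra path coordinate, and $H=H_0\cap H_1\cap H_2$ gives $3b_d$ via Observation \ref{observationIntersection}. The final substitution of Kostochka's bound is routine, as you say.
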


We would also like to remark that a better upper or lower bound on the boxicity of hypercubes will in turn give a commensurate upper or lower bound on the dimension of the poset formed by neighbouring levels of Boolean lattices.  

In order to extend these results on hypercubes to Hamming graphs, we use multiple weak homomorphisms of the Hamming graph $K_q^d$ into the hypercube $K_2^d$. The homomorphisms are generated based on a labelling of the vertices of each copy of $K_q$ using a double distinguishing family of subsets of a small universe.  A family $\calD$ of sets is called {\em double distinguishing} if for any two pairs of set $A,A'$ and $B, B'$ from $\calD$, such that $A \neq A'$ and $B \neq B'$, we have $ (A \sdiff  A') \cap (B \sdiff B') \neq \emptyset$. The existence of such a family over a small universe is established using probabilistic arguments. This gives us the upper bounds in the following result. The lower bounds follow from a result on boxicity of line graphs of complete bipartite graphs in \cite{basavaraju2012pairwise} once we note that $K_q^2$ is isomorphic to the line graph of a complete bipartite graph.

\begin{theorem}
\label{theoremHamming}
Let $K_q^d$ be the d-dimensional Hamming graph on the alphabet $[q]$ and let $K_2^d$ be the d-dimensional hypercube. Then for $d \geq 2$,
\[ 
\begin{array}{rcccl}
\log q &\leq & 
	boxicity(K_q^d) & \leq & 
	\ceil{10 \log q} \boxicity(K_2^d), \AND \\
\log q &\leq & 
	cubicity(K_q^d) & \leq & 
	\ceil{10 \log q} \cubicity(K_2^d).
\end{array}
\]
\end{theorem}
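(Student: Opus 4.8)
For $d \geq 2$, fixing the last $d-2$ coordinates of a vertex of $K_q^d$ to an arbitrary constant string exhibits $K_q \cart K_q = K_q^2$ as an induced subgraph of $K_q^d$. Since neither boxicity nor cubicity increases under taking induced subgraphs, and since a $k$-cube representation is in particular a $k$-box representation (so $\cubicity(H) \geq \boxicity(H)$ for every graph $H$), both lower bounds reduce to the single inequality $\boxicity(K_q^2) \geq \log q$. Identifying the vertex $(i,j)$ of $K_q \cart K_q$ with the edge $ij$ of $K_{q,q}$ gives $K_q^2 \cong L(K_{q,q})$, and $\boxicity(L(K_{q,q})) \geq \log q$ is precisely the bound we invoke from \cite{basavaraju2012pairwise}.

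\textbf{Upper bounds via several weak homomorphisms at once.} The plan for the upper bounds is to pull a box (cube) representation of the hypercube back to the Hamming graph along several weak homomorphisms simultaneously. Call maps $\phi_1, \dots, \phi_k \colon V(G) \to V(H)$ \emph{jointly faithful} if each $\phi_t$ is a weak homomorphism (that is, $uv \in E(G)$ forces $\phi_t(u) = \phi_t(v)$ or $\phi_t(u)\phi_t(v) \in E(H)$) and, for every non-edge $uv$ of $G$ with $u \neq v$, some $\phi_t$ has $\phi_t(u) \neq \phi_t(v)$ and $\phi_t(u)\phi_t(v) \notin E(H)$. If $f$ is a box (cube) representation of $H$ in $\R^b$, then $u \mapsto f(\phi_1(u)) \times \cdots \times f(\phi_k(u))$ is a box (cube --- a product of unit cubes is a unit cube) representation of $G$ in $\R^{kb}$, because two images intersect iff all $k$ pairs of corresponding factors do, which by joint faithfulness happens exactly on the edges of $G$. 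Hence a jointly faithful family of size $k$ from $K_q^d$ to $K_2^d$ yields $\boxicity(K_q^d) \leq k \cdot \boxicity(K_2^d)$ and $\cubicity(K_q^d) \leq k \cdot \cubicity(K_2^d)$, and it suffices to construct one with $k = \ceil{10 \log q}$.

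\textbf{Turning a double distinguishing family into a jointly faithful one.} Let $\calD = \{S_a : a \in [q]\}$ be a double distinguishing family of $q$ \emph{distinct} subsets of $[k]$. For $t \in [k]$, put $\psi_t(a) = 1$ if $t \in S_a$ and $0$ otherwise, and let $\phi_t \colon [q]^d \to \{0,1\}^d$ act coordinatewise, $\phi_t(a_1, \dots, a_d) = (\psi_t(a_1), \dots, \psi_t(a_d))$. Each $\psi_t \colon K_q \to K_2$ is (trivially) a weak homomorphism, so $\phi_t$, being its $d$-th Cartesian power, is a weak homomorphism $K_q^d \to K_2^d$. The key observation is that ``$\psi_t$ separates $a$ and $b$'' means exactly ``$t \in S_a \sdiff S_b$''. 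So, given a non-edge $uv$ of $K_q^d$ with $u \neq v$, choose two coordinates $i \neq i'$ on which $u$ and $v$ differ; then $S_{u_i} \neq S_{v_i}$ and $S_{u_{i'}} \neq S_{v_{i'}}$, the double distinguishing property supplies $t \in (S_{u_i} \sdiff S_{v_i}) \cap (S_{u_{i'}} \sdiff S_{v_{i'}})$, and for this $t$ the strings $\phi_t(u)$ and $\phi_t(v)$ differ in both coordinates $i$ and $i'$, hence are distinct and non-adjacent in $K_2^d$. Thus $\phi_1, \dots, \phi_k$ are jointly faithful, and this is where the definition of ``double distinguishing'' is exactly the right hypothesis.

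\textbf{Existence of the double distinguishing family, and the main obstacle.} It remains to prove that a double distinguishing family of $q$ distinct subsets of $[k]$ exists with $k = \ceil{10 \log q}$; this is the only probabilistic step and the technical heart of the argument. Choose $S_1, \dots, S_q \subseteq [k]$ by independently placing each element of $[k]$ in each $S_a$ with probability $1/2$. For fixed $a \neq b$ and $c \neq d$ with $\{a,b\} \neq \{c,d\}$, examining each element of $[k]$ separately --- and distinguishing the cases where $\{a,b\}$ and $\{c,d\}$ are disjoint or share one element --- shows that each $t$ lies in $(S_a \sdiff S_b) \cap (S_c \sdiff S_d)$ with probability exactly $1/4$, so $\Pr\big[(S_a \sdiff S_b) \cap (S_c \sdiff S_d) = \emptyset\big] = (3/4)^k$; also $\Pr[S_a = S_b] = 2^{-k} \leq (3/4)^k$. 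There are fewer than $q^4$ such constraints (ordered pairs of $2$-subsets, together with the distinctness requirements), so the union bound gives failure probability at most $q^4 (3/4)^k$, which is below $1$ as soon as $k > 4 \log q / \log(4/3)$; since $4/\log(4/3) < 10$, the choice $k = \ceil{10 \log q} \geq 10 \log q$ works. The main obstacles are thus (a) realising that the double distinguishing condition is precisely what kills every non-edge of $K_q^d$ after the pullback --- handled by fixing two differing coordinates --- and (b) establishing the uniform probability bound $1/4$ across the overlap cases, so that a clean constant survives the union bound.
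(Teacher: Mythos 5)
Your proposal is correct and follows essentially the same route as the paper: the ``jointly faithful'' family is exactly the paper's notion of a $K_2^d$-realiser built from weak homomorphisms, the pullback of a box/cube representation along such a family is the paper's Lemma~\ref{lemmaRealiser}, the construction from a double distinguishing family and its probabilistic existence proof (with the same $1/4$ bound and union bound over fewer than $q^4$ quadruples) mirror Lemmata~\ref{lemmaDoubleDistinguishing} and~\ref{lemmaHamming}, and the lower bound via $K_q^2 \cong L(K_{q,q})$ is the same citation. The only cosmetic difference is that you handle the case $\{A,A'\}=\{B,B'\}$ by separately bounding $\Pr[S_a=S_b]$, where the paper folds it into the ``probability at least $1/4$'' estimate.
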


Theorem \ref{theoremMaxCubicity}, along with the bounds on boxicity and cubicity of Hamming graphs, gives the following corollary which is the main result in this article. The lower bound on the order of growth is due to the presence of $K_2^d$ as an induced subgraph in the $d$-the Cartesian power of any non-trivial graph.

\begin{corollary}
\label{corollaryGrowthCartesian}
For any given graph $G$ with at least one edge, 
\begin{eqnarray*}
\boxicity(\Gcd) &\in& \order{\log d / \log\log d} \cap \orderatleast{\log\log d}, 
	\AND \\
\cubicity(\Gcd) &\in& \orderexactly{d / \log d}.
\end{eqnarray*}
\end{corollary}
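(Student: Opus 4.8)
The plan is to assemble the corollary from the machinery already set up, treating upper and lower bounds separately. For the \emph{upper bounds}, I would start from Theorem \ref{theoremMaxCubicity} applied to the $d$ identical factors $G_i = G$. Writing $q = |V(G)|$ and $c = \cubicity(G)$, both of which are constants since $G$ is fixed, this gives $\boxicity(\Gcd) \leq c + \boxicity(K_q^d)$ and $\cubicity(\Gcd) \leq c + \cubicity(K_q^d)$. Now feed in Theorem \ref{theoremHamming}: $\boxicity(K_q^d) \leq \ceil{10\log q}\,\boxicity(K_2^d)$ and likewise for cubicity, so the Hamming-graph bounds reduce to hypercube bounds up to the constant factor $\ceil{10\log q}$. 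Finally apply Theorem \ref{theoremHypercubeBooleanLattice}, which gives $\boxicity(K_2^d) \leq 12\log d/\log\log d$, and the known result $\cubicity(K_2^d) \in \order{d/\log d}$ (due to \cite{CN99}). Chaining these inequalities yields $\boxicity(\Gcd) \in \order{\log d/\log\log d}$ and $\cubicity(\Gcd) \in \order{d/\log d}$, the additive constants $c$ being absorbed into the $O(\cdot)$.

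For the \emph{lower bounds}, the key observation — already flagged in the text preceding the corollary — is that if $G$ has at least one edge, say $uv \in E(G)$, then the subgraph of $\Gcd$ induced on the $2^d$ vertices $\{u,v\}^d \subseteq V(G)^d$ is isomorphic to the hypercube $K_2^d$: two such vertices are adjacent in the Cartesian power precisely when they differ in exactly one coordinate, which is the edge relation of $K_2^d$. Since boxicity and cubicity are monotone under taking induced subgraphs, we get $\boxicity(\Gcd) \geq \boxicity(K_2^d)$ and $\cubicity(\Gcd) \geq \cubicity(K_2^d)$. The lower bound $\cubicity(K_2^d) \in \orderatleast{d/\log d}$ from \cite{CMO} then completes the $\orderexactly{d/\log d}$ claim for cubicity, and the lower bound $\boxicity(K_2^d) \geq \half(\ceil{\log\log d}+1)$ from \cite{RogSunSiv} gives $\boxicity(\Gcd) \in \orderatleast{\log\log d}$.

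I do not expect any genuine obstacle here: the corollary is essentially bookkeeping, combining Theorems \ref{theoremMaxCubicity}, \ref{theoremHamming}, and \ref{theoremHypercubeBooleanLattice} with the cited external bounds on the cubicity of hypercubes and the trivial induced-subgraph argument. The one point that deserves a sentence of care is the verification that $\{u,v\}^d$ induces exactly $K_2^d$ in $\Gcd$ and not more — one must check that a pair of vertices of $\{u,v\}^d$ differing in two or more coordinates is genuinely non-adjacent in the Cartesian power, which is immediate from the definition of $\cart$ since Cartesian-product edges change only a single coordinate. The only mildly delicate part of the whole argument is really upstream, in the probabilistic construction of the double distinguishing family underlying Theorem \ref{theoremHamming} and in invoking Kostochka's poset-dimension bound for Theorem \ref{theoremHypercubeBooleanLattice}; but those are proved separately and may be used here as black boxes.
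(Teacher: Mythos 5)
Your proposal is correct and follows exactly the route the paper intends: Theorem \ref{theoremMaxCubicity} combined with Theorems \ref{theoremHamming} and \ref{theoremHypercubeBooleanLattice} (and the known $\order{d/\log d}$ cubicity of hypercubes) for the upper bounds, and the induced copy of $K_2^d$ on $\{u,v\}^d$ together with Observation \ref{observationInducedSubgraph} and the cited hypercube lower bounds for the lower bounds. Nothing is missing.
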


%%%%%%%%%%%%%%%%%%%%%%%%%%%%%%%%%%%%%%%%%%%%%%%%%%%%%%%%%%%%%%%%%%%%%%%%%%%%%%%%
\subsection{Direct products}
%%%%%%%%%%%%%%%%%%%%%%%%%%%%%%%%%%%%%%%%%%%%%%%%%%%%%%%%%%%%%%%%%%%%%%%%%%%%%%%%

\begin{theorem}
\label{theoremDirectStrong}
For graphs $G_1, \ldots, G_d$, 
\begin{eqnarray*}
\boxicity(\direct_{i=1}^d G_i) & \leq & 
	\boxicity(\strong_{i=1}^d G_i) + 
	\boxicity(\direct_{i=1}^d K_{\chi_i})
	\textnormal{ and} \\
\cubicity(\direct_{i=1}^d G_i) & \leq & 
	\cubicity(\strong_{i=1}^d G_i) + 
	\cubicity(\direct_{i=1}^d K_{\chi_i})
\end{eqnarray*}
where $\chi_i$ denotes the chromatic number of $G_i, i \in [d]$.
\end{theorem}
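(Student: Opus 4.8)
The plan is to realize $\direct_{i=1}^{d} G_i$ as a graph obtained from $\strong_{i=1}^{d} G_i$ by edge deletions, and to perform exactly those deletions using an auxiliary representation borrowed from $\direct_{i=1}^{d} K_{\chi_i}$. First I would record the structural fact that $\direct_{i=1}^{d} G_i$ is a spanning subgraph of $\strong_{i=1}^{d} G_i$: both graphs have vertex set $\prod_{i} V(G_i)$, and whenever $uv$ is an edge of the direct product (so every coordinate pair $u_iv_i$ lies in $E(G_i)$, in particular $u_i \neq v_i$ for all $i$) it is an edge of the strong product. Moreover, an edge $uv$ of the strong product fails to be an edge of the direct product precisely when $u_i = v_i$ for at least one coordinate $i$.

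Next, for each $i$ fix a proper colouring $\phi_i : V(G_i) \to [\chi_i]$, and let $\Phi\bigl((u_1,\ldots,u_d)\bigr) = (\phi_1(u_1),\ldots,\phi_d(u_d))$, a map from $\prod_i V(G_i)$ to $\prod_i [\chi_i]$. I would extract two properties. (i) If $uv \in E(\direct_{i=1}^{d} G_i)$ then $\phi_i(u_i) \neq \phi_i(v_i)$ for every $i$, so $\Phi(u)$ and $\Phi(v)$ differ in every coordinate and hence $\Phi(u)\Phi(v) \in E(\direct_{i=1}^{d} K_{\chi_i})$. (ii) If $uv \in E(\strong_{i=1}^{d} G_i) \setminus E(\direct_{i=1}^{d} G_i)$ then some coordinate $i$ has $u_i = v_i$, hence $\phi_i(u_i) = \phi_i(v_i)$, so $\Phi(u)$ and $\Phi(v)$ agree in coordinate $i$; and furthermore $\Phi(u) \neq \Phi(v)$, because $\Phi(u) = \Phi(v)$ would force $\phi_i(u_i) = \phi_i(v_i)$ for every $i$, which, together with the strong-product condition that each coordinate pair is equal or adjacent in $G_i$ (adjacency being ruled out by the proper colouring), would force $u = v$. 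So in case (ii) the vertices $\Phi(u), \Phi(v)$ are distinct and non-adjacent in $\direct_{i=1}^{d} K_{\chi_i}$.

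Finally I would assemble the box representation. Take an optimal box representation $f$ of $\strong_{i=1}^{d} G_i$ in $\boxicity(\strong_{i=1}^{d} G_i)$ dimensions and an optimal box representation $g$ of $\direct_{i=1}^{d} K_{\chi_i}$ in $\boxicity(\direct_{i=1}^{d} K_{\chi_i})$ dimensions, and set $f'(u) = f(u) \times g(\Phi(u))$. For distinct vertices $u,v$, the boxes $f'(u)$ and $f'(v)$ intersect if and only if $f(u) \cap f(v) \neq \emptyset$ and $g(\Phi(u)) \cap g(\Phi(v)) \neq \emptyset$. If $uv \in E(\direct_{i=1}^{d} G_i)$, the first intersection is nonempty since the direct product is a subgraph of the strong product, and the second is nonempty by (i); so $f'(u) \cap f'(v) \neq \emptyset$. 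If $uv \notin E(\direct_{i=1}^{d} G_i)$, then either $uv \notin E(\strong_{i=1}^{d} G_i)$, making the first intersection empty, or $uv \in E(\strong_{i=1}^{d} G_i) \setminus E(\direct_{i=1}^{d} G_i)$, in which case (ii) makes the second intersection empty; either way $f'(u) \cap f'(v) = \emptyset$. Hence $f'$ is a box representation of $\direct_{i=1}^{d} G_i$ in $\boxicity(\strong_{i=1}^{d} G_i) + \boxicity(\direct_{i=1}^{d} K_{\chi_i})$ dimensions, which is the first inequality. The cubicity inequality follows verbatim: taking $f$ and $g$ to be cube representations makes each $f'(u)$ a cube.

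The step I expect to need the most care is property (ii), specifically the verification that $\Phi$ never collapses two distinct vertices that are adjacent in the strong product but not in the direct product; without this, the auxiliary coordinates would fail to suppress precisely the spurious edges they are meant to remove. Everything else is the routine ``concatenate two coordinate systems'' argument used for boxicity of subgraph-type constructions.
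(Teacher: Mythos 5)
Your proposal is correct and is essentially the paper's own proof: the paper defines $f((v_1,\ldots,v_d)) = f_s((v_1,\ldots,v_d)) \times f_{\chi}((c_1(v_1),\ldots,c_d(v_d)))$ and asserts the verification is easy, which is exactly your construction. The only difference is that you spell out the verification, including the genuinely necessary check that $\Phi$ does not collapse distinct vertices that are strong-product-adjacent but not direct-product-adjacent.
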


In the wake of Theorem \ref{theoremDirectStrong}, it is useful to estimate the boxicity and the cubicity of the direct product of complete graphs. Before stating our result on the same, we would like to discuss a few special cases. If $G = \direct_{i=1}^d K_2$ then $G$ is a perfect matching on $2^d$ vertices and hence has boxicity and cubicity equal to $1$. If $G = K_q \direct K_2$, then it is isomorphic to a graph obtained by removing a perfect matching from the complete bipartite graph with $q$ vertices on each part. This is known as the {\em crown graph} and its boxicity is known to be $\lceil q/2 \rceil$ \cite{chintan}. 

\begin{theorem}
\label{theoremBoxicityCompleteDirect}
Let $q_i \geq 2$ for each $i \in [d]$. Then,
\[
\begin{array}{rcccl}
\frac{1}{2} \sum_{i=1}^d (q_i - 2) &\leq& 
	\boxicity\left(\direct_{i=1}^d K_{q_i} \right) &\leq& 
	\sum_{i=1}^d q_i, \AND \\
\frac{1}{2} \sum_{i=1}^d (q_i - 2) &\leq& 
	\cubicity\left(\direct_{i=1}^d K_{q_i} \right) &\leq& 
	\sum_{i=1}^d q_i \log (n/q_i),
	%\sum_{i=1}^d q_i \sum_{j \in [d] \setminus \{i\}}\log q_j.
\end{array}
\]
where $n = \Pi_{i=1}^d q_i$ is the number of vertices in $\direct_{i=1}^d K_{q_i}$.
\end{theorem}

We believe that it might be possible to improve the upper bound on cubicity to match its lower bound (up to constants). But we leave it for the future. The two results established above have the following two corollaries.

\begin{corollary}
\label{corollaryDirectBoxicity}
For graphs $G_1, \ldots, G_d$, 
\[
\boxicity(\direct_{i=1}^d G_i) \leq \sum_{i=1}^d(\boxicity(G_i) + \chi(G_i)).
\]
\end{corollary}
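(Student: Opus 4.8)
The plan is to obtain this bound by simply composing the three results already proved about direct and strong products, with no extra work beyond bookkeeping. Concretely, Theorem~\ref{theoremDirectStrong} says that $\boxicity(\direct_{i=1}^d G_i)$ is at most $\boxicity(\strong_{i=1}^d G_i) + \boxicity(\direct_{i=1}^d K_{\chi_i})$, where $\chi_i = \chi(G_i)$. I would then bound the first summand using the upper bound of Theorem~\ref{theoremStrongProduct}, namely $\boxicity(\strong_{i=1}^d G_i) \le \sum_{i=1}^d \boxicity(G_i)$, and the second summand using the upper bound of Theorem~\ref{theoremBoxicityCompleteDirect} applied with alphabet sizes $q_i = \chi_i$, namely $\boxicity(\direct_{i=1}^d K_{\chi_i}) \le \sum_{i=1}^d \chi_i$. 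Adding the two estimates yields exactly $\boxicity(\direct_{i=1}^d G_i) \le \sum_{i=1}^d \bigl(\boxicity(G_i) + \chi(G_i)\bigr)$.

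The one point that needs care is that Theorem~\ref{theoremBoxicityCompleteDirect} requires $q_i \ge 2$ for every $i$, i.e.\ it can only be invoked directly when $\chi_i \ge 2$ for all $i$, equivalently when every $G_i$ has at least one edge. So I would split into two cases. In the case where each $G_i$ has an edge, the chain of inequalities above goes through verbatim and gives the claim. In the remaining case, some $G_j$ is edgeless; then by the definition of the direct product (an edge of $\direct_{i=1}^d G_i$ needs an edge in each coordinate) the graph $\direct_{i=1}^d G_i$ is itself edgeless, so its boxicity is at most $1$, whereas the right-hand side is at least $\sum_{i=1}^d \chi(G_i) \ge d \ge 1$, and the inequality holds trivially.

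I do not expect any real obstacle here: the corollary is an immediate consequence of Theorems~\ref{theoremDirectStrong}, \ref{theoremStrongProduct} and~\ref{theoremBoxicityCompleteDirect}, and the only mildly delicate step is noticing that the hypothesis $q_i \ge 2$ of Theorem~\ref{theoremBoxicityCompleteDirect} is not a loss of generality, since graphs without an edge make the whole direct product edgeless and hence make the bound vacuous. An entirely analogous argument would give the corresponding statement for cubicity if one wanted it, using the cubicity halves of the same three theorems.
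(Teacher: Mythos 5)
Your proposal is correct and is exactly the derivation the paper intends: the corollary follows by chaining Theorem~\ref{theoremDirectStrong} with the upper bounds of Theorems~\ref{theoremStrongProduct} and~\ref{theoremBoxicityCompleteDirect} (with $q_i = \chi_i$). Your observation about the case $\chi_j = 1$ (some $G_j$ edgeless, making the whole direct product edgeless and the bound trivial) is a small point the paper glosses over, and you handle it correctly.
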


\begin{corollary}
\label{corollaryGrowthSDirect}
For any given graph $G$, $boxicity(\Gdd)$ is in $\order{d}$ and there exist graphs for which it is in $\orderatleast{d}$.
\end{corollary}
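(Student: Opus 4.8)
The plan is to obtain both bounds as quick consequences of results already established for direct products, with essentially no new argument required.

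For the upper bound, I would apply Corollary \ref{corollaryDirectBoxicity} with $G_1 = \cdots = G_d = G$, which gives $\boxicity(\Gdd) \le d\bigl(\boxicity(G) + \chi(G)\bigr)$. Since $G$ is fixed, $\boxicity(G)$ and $\chi(G)$ are constants independent of $d$, so the right-hand side is $\order{d}$. The bound remains valid, though vacuous, when $G$ is edgeless, in which case $\Gdd$ is edgeless and has boxicity $0$.

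For the lower bound it suffices to exhibit a single family of graphs whose $d$-th direct power has boxicity $\orderatleast{d}$. I would take $G = K_3$ and invoke the lower bound of Theorem \ref{theoremBoxicityCompleteDirect} with $q_i = 3$ for every $i \in [d]$:
\[
\boxicity\bigl(K_3^{\direct d}\bigr) \;\ge\; \frac{1}{2}\sum_{i=1}^d (3 - 2) \;=\; \frac{d}{2} \;\in\; \orderatleast{d}.
\]

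There is no real obstacle here: the substantive content lives entirely in Corollary \ref{corollaryDirectBoxicity} and Theorem \ref{theoremBoxicityCompleteDirect}. The only point requiring a moment's care is observing that, for a fixed component graph, $\boxicity(G)$ and $\chi(G)$ do not grow with the power $d$, so the bound supplied by Corollary \ref{corollaryDirectBoxicity} is genuinely linear in $d$ rather than $\order{d \cdot f(G)}$ for some quantity $f(G)$ that could itself blow up.
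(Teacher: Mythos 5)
Your proof is correct and follows the same route the paper intends: the upper bound is immediate from Corollary \ref{corollaryDirectBoxicity} with all $d$ factors equal to the fixed graph $G$, and the lower bound comes from the lower bound of Theorem \ref{theoremBoxicityCompleteDirect} applied to $K_3^{\direct d}$, which gives $d/2$ (the paper states this corollary without a separate written proof precisely because it follows directly from those two results). The only nit is your aside that an edgeless $\Gdd$ has boxicity $0$ --- it is $1$ whenever there are at least two vertices --- but this does not affect the argument.
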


\bibliographystyle{plain}

\clearpage

\begin{appendix}

%%%%%%%%%%%%%%%%%%%%%%%%%%%%%%%%%%%%%%%%%%%%%%%%%%%%%%%%%%%%%%%%%%%%%%%%%%%%%%%%
\def\thesection{Appendix \Alph{section}}
\section{Preliminaries}
\def\thesection{\Alph{section}}
%%%%%%%%%%%%%%%%%%%%%%%%%%%%%%%%%%%%%%%%%%%%%%%%%%%%%%%%%%%%%%%%%%%%%%%%%%%%%%%%

Before giving the proofs of the results stated in Section \ref{sectionOurResults}, we collect together some results from literature and some easy observations which are used in the proofs given in Section \ref{sectionProofs}. First we give a more combinatorial characterisation for boxicity and cubicity, which is easier to work with at times. 

From Definition \ref{definitionBoxicityCubicity}, it is clear that interval graphs are precisely the graphs with boxicity at most $1$. Given a $k$-box representation of a graph $G$, orthogonally projecting the $k$-boxes to each of the $k$ axes in $\R^k$  gives $k$ families of intervals. Each one of these families can be thought of as an interval representation of some interval graph. Thus we get $k$ interval graphs. It is not difficult to observe that a pair of vertices is  adjacent in $G$ if and only if the pair is adjacent in each of the $k$ interval graphs obtained. Similarly unit interval graphs are precisely the graphs with cubicity $1$, and the orthogonal projections of a $k$-cube representation of a graph $G$ to each of the $k$ axes in $\R^k$ give rise to $k$ unit interval graphs, whose intersection is $G$.

The following lemma, due to Roberts \cite{Roberts}, formalises this relation between box representations and interval graphs.

\begin{lemma}[Roberts \cite{Roberts}]
\label{lemmaRoberts}
For every graph $G$, $\boxicity G \leq k ~ (\cubicity G \leq k)$ if and only if there exist $k$ interval graphs (unit interval graphs) $I_1, \ldots, I_k$, with $V(I_1) = \cdots = V(I_k) = V(G)$ such that $G = I_1 \cap \cdots \cap I_k$.
\end{lemma}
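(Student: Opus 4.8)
The plan is to prove both directions of the equivalence, treating the box/interval-graph case in detail and noting that the cube/unit-interval case is identical word for word once "interval" is replaced by "unit interval" throughout. Throughout, I use the elementary fact that a set of $k$-boxes $B_1 = \prod_{j=1}^k I_1^j, \ldots, B_m = \prod_{j=1}^k I_m^j$ in $\R^k$ has $B_a \cap B_b \neq \emptyset$ if and only if $I_a^j \cap I_b^j \neq \emptyset$ for every coordinate $j \in [k]$; this is just the statement that a product of intervals is nonempty iff each factor is nonempty.

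First I would prove the forward direction. Suppose $\boxicity(G) \leq k$, so $G$ has a $k$-box representation $f$, with $f(v) = \prod_{j=1}^k I_v^j$ for each $v \in V(G)$. For each coordinate $j \in [k]$, define a graph $I_j$ on vertex set $V(G)$ by making $uv$ an edge of $I_j$ exactly when $I_u^j \cap I_v^j \neq \emptyset$. The family $\{I_u^j : u \in V(G)\}$ is an interval representation of $I_j$, so each $I_j$ is an interval graph. By the product-of-intervals fact, $uv \in E(G)$ iff $f(u) \cap f(v) \neq \emptyset$ iff $I_u^j \cap I_v^j \neq \emptyset$ for all $j$ iff $uv \in E(I_j)$ for all $j$; hence $G = I_1 \cap \cdots \cap I_k$. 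In the cubicity case the intervals $I_v^j$ all have unit length, so each $I_j$ is a unit interval graph.

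Next I would prove the converse. Suppose there exist interval graphs $I_1, \ldots, I_k$ on the common vertex set $V(G)$ with $G = I_1 \cap \cdots \cap I_k$. Fix, for each $j \in [k]$, an interval representation of $I_j$ assigning to each vertex $v$ an interval $I_v^j \subseteq \R$, so that $uv \in E(I_j)$ iff $I_u^j \cap I_v^j \neq \emptyset$. Define $f \colon V(G) \to \R^k$ by $f(v) = \prod_{j=1}^k I_v^j$, a $k$-box. Then for distinct $u,v$: $f(u) \cap f(v) \neq \emptyset$ iff $I_u^j \cap I_v^j \neq \emptyset$ for every $j$ iff $uv \in E(I_j)$ for every $j$ iff $uv \in \bigcap_j E(I_j) = E(G)$. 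Thus $f$ is a $k$-box representation of $G$, so $\boxicity(G) \leq k$. For cubicity, one takes unit interval representations of the $I_j$, so every $I_v^j$ has unit length and $f(v)$ is a $k$-cube.

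The argument has no real obstacle; the only point deserving slight care is the bookkeeping that "adjacent in every $I_j$" corresponds exactly to "the boxes meet," which rests entirely on the coordinatewise nonemptiness criterion for intersecting boxes noted above. I would present the box case fully and then remark that the cube case follows by replacing every occurrence of "interval (graph)" with "unit interval (graph)," since unit-length intervals are closed under the construction in both directions.
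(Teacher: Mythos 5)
Your proof is correct and matches the paper's treatment: the paper only sketches this classical result of Roberts informally (projecting the $k$-boxes onto the coordinate axes to obtain $k$ interval graphs whose intersection is $G$, and implicitly the reverse product construction), which is exactly the argument you give in full. The coordinatewise nonemptiness criterion for intersecting boxes is the right key fact, and your handling of the unit-interval/cube case by direct substitution is fine.
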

From the above lemma, we get these alternate definitions of boxicity and cubicity. 

\begin{definition}
\label{definitionBoxicityInterval}
The {\em boxicity} ({\em cubicity}) of a graph $G$ is the minimum positive integer $k$ for which there exist $k$ interval graphs (unit interval graphs) $I_1,\ldots, I_k$ such that $G = I_1 \cap \cdots \cap I_k$.  
\end{definition}

Note that if $G = I_1 \cap \cdots \cap I_k$, then each $I_i$ is a supergraph of $G$. Moreover, for every pair of vertices $u,v \in V(G)$ with $\{u,v\} \notin E(G)$, there exists some $i \in [k]$ such that $\{u,v\} \notin E(I_i)$. Hence finding a $k$-box representation ($k$-cube representation) of a graph $G$ is the same as finding $k$ interval supergraphs (unit interval supergraphs) of $G$ with the property that every pair of non-adjacent vertices in $G$ is non-adjacent in at least one of those supergraphs. The following observations are immediate from one of the definitions of boxicity and cubicity.

\begin{observation}
\label{observationInducedSubgraph}
If $H$ is an induced subgraph of a graph $G$, then the boxicity (cubicity) of $H$ is at most the boxicity (cubicity) of $G$.
\end{observation}

\begin{observation}
\label{observationIntersection}
The {\em intersection} of two graphs $G_1$ and $G_2$ on the same vertex set is the graph, denoted by $G_1 \cap G_2$, is the graph on the same vertex set with edge set $E(G_1) \cap E(G_2)$. The boxicity (cubicity) of $G_1 \cap G_2$ is at most the sum of the boxicities (cubicities) of $G_1$ and $G_2$.
\end{observation}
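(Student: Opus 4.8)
The plan is to read off the result directly from the interval-graph characterisation of boxicity and cubicity (Lemma~\ref{lemmaRoberts}, equivalently Definition~\ref{definitionBoxicityInterval}). Write $b_i = \boxicity(G_i)$ for $i \in \{1,2\}$ and first assume $b_1, b_2 \geq 1$. By Definition~\ref{definitionBoxicityInterval} there are interval graphs $I_1, \ldots, I_{b_1}$, all on the vertex set $V(G_1)$, with $G_1 = I_1 \cap \cdots \cap I_{b_1}$, and interval graphs $J_1, \ldots, J_{b_2}$, all on the vertex set $V(G_2) = V(G_1)$, with $G_2 = J_1 \cap \cdots \cap J_{b_2}$. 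Since the intersection of graphs on a common vertex set is associative and commutative (the edge set is just the intersection of the edge sets), we get $G_1 \cap G_2 = I_1 \cap \cdots \cap I_{b_1} \cap J_1 \cap \cdots \cap J_{b_2}$, which exhibits $G_1 \cap G_2$ as an intersection of $b_1 + b_2$ interval graphs. Applying Lemma~\ref{lemmaRoberts} in the other direction yields $\boxicity(G_1 \cap G_2) \leq b_1 + b_2$.

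The cubicity statement is obtained by the same argument with ``interval graph'' replaced by ``unit interval graph'' throughout, since Definition~\ref{definitionBoxicityInterval} gives the analogous characterisation of cubicity. It remains only to dispose of the degenerate cases in which some $b_i = 0$: then $G_i$ is a complete graph on $V(G_i)$, so $G_1 \cap G_2 = G_{3-i}$ and the claimed inequality reduces to $\boxicity(G_{3-i}) \leq \boxicity(G_{3-i})$, which holds trivially; likewise for cubicity.

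I do not anticipate any real obstacle here: the statement is essentially immediate from Roberts' characterisation. The single point that warrants a line of care is that all the graphs involved are taken on one and the same vertex set, which is built into the definition of graph intersection; this is exactly what lets the interval (resp.\ unit interval) representations of the $I_j$'s and $J_j$'s be overlaid as the coordinate projections of a single box (resp.\ cube) representation of $G_1 \cap G_2$ in $\R^{b_1 + b_2}$.
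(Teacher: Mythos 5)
Your proposal is correct and follows exactly the route the paper intends: the paper gives no explicit proof, declaring the observation ``immediate from one of the definitions of boxicity and cubicity,'' namely the interval-graph characterisation of Definition~\ref{definitionBoxicityInterval}, and your argument of concatenating the two families of interval (unit interval) graphs is precisely that. Your extra care with the degenerate case $b_i = 0$ is a harmless refinement.
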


\begin{observation}
\label{observationDisjointUnion}
The {\em disjoint union} of two graphs $G_1$ and $G_2$ on disjoint vertex sets, denoted by $G_1 \uplus G_2$, is the graph with vertex set $V(G_1) \cup V(G_2)$ and edge set $E(G_1) \cup E(G_2)$. The boxicity (cubicity) of $G_1 \uplus G_2$ is equal to the larger of the boxicities (cubicities) of $G_1$ and $G_2$.
\end{observation}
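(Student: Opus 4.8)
The plan is to prove the two inequalities separately. Write $b_j=\boxicity(G_j)$ and $c_j=\cubicity(G_j)$, and assume without loss of generality that $b_1\ge b_2$ (the cubicity case is symmetric). For the lower bound I would simply note that, since $G_1\uplus G_2$ has no edges between $V(G_1)$ and $V(G_2)$, each $G_j$ occurs as an induced subgraph of $G_1\uplus G_2$; Observation~\ref{observationInducedSubgraph} then yields $\boxicity(G_1\uplus G_2)\ge\max\{b_1,b_2\}$ and $\cubicity(G_1\uplus G_2)\ge\max\{c_1,c_2\}$.

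For the upper bound I would build a $\max\{b_1,b_2\}$-box representation of $G_1\uplus G_2$ explicitly. Start from a $b_1$-box representation $f_1$ of $G_1$ and a $b_2$-box representation $f_2$ of $G_2$, and first pad $f_2$ to a $b_1$-box representation $f_2'$ of $G_2$ by assigning, in each of the $b_1-b_2$ extra coordinates, one fixed closed interval to every vertex of $G_2$; since every pair of vertices then receives intersecting intervals in the new coordinates, the represented graph is unchanged. Because $V(G_1)$ and $V(G_2)$ are finite, all boxes in the images of $f_1$ and $f_2'$ lie in a bounded region of $\R^{b_1}$, so I can choose a translation vector $t\in\R^{b_1}$ (a large shift along the first axis, say) so that every box $t+f_2'(v)$, $v\in V(G_2)$, is disjoint from every box $f_1(u)$, $u\in V(G_1)$. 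Then $f$ defined by $f(u)=f_1(u)$ on $V(G_1)$ and $f(v)=t+f_2'(v)$ on $V(G_2)$ is the desired representation: within each part the intersection pattern is inherited from $f_1$ and $f_2'$ (translation is an isometry), and across the parts no two boxes meet, matching the absence of edges between the parts. This gives $\boxicity(G_1\uplus G_2)\le\max\{b_1,b_2\}$, and the cubicity bound follows from the identical construction once the padding intervals are taken of unit length (translation preserves lengths).

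I do not expect a genuine obstacle: the argument is just induced-subgraph monotonicity together with a translation trick. The single point I would flag is the degenerate case $b_1=b_2=0$ (both $G_j$ complete), in which $G_1\uplus G_2$ is a disjoint union of two cliques and so has boxicity and cubicity $1$ rather than $0$; the statement is therefore to be read under the implicit understanding that $G_1\uplus G_2$ is not complete, or equivalently with the right-hand sides replaced by $\max\{b_1,b_2,1\}$ and $\max\{c_1,c_2,1\}$.
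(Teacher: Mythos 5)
Your proof is correct and is exactly the intended argument: the paper states this as an ``immediate'' observation and supplies no proof, and the standard justification is precisely your combination of induced-subgraph monotonicity (Observation~\ref{observationInducedSubgraph}) for the lower bound with padding-plus-translation for the upper bound. Your caveat about the degenerate case where both $G_1$ and $G_2$ are nonempty complete graphs (so that $G_1\uplus G_2$ has boxicity and cubicity $1$ rather than $0$) is a genuine, correct qualification that the paper's statement overlooks, though it is harmless for every use of the observation in the paper, since there the components always include a graph of positive boxicity.
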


The next observation is not as easy, but follows once we note that, since interval graphs cannot contain induced $4$-cycles, in any interval supergraph of $G_1 \otimes G_2$ either $V(G_1)$ or $V(G_2)$ induces a complete graph. 

\begin{observation}
\label{observationJoin}
The {\em join} of two graphs $G_1$ and $G_2$ on disjoint vertex sets, denoted by $G_1 \otimes G_2$, is the graph with vertex set $V(G_1) \cup V(G_2)$ and edge set $E(G_1) \cup E(G_2) \cup \{\{u_1, u_2\} : u_1 \in V(G_1), u_2 \in V(G_2)\}$. Then
\[
\begin{array}{rcll}
	\boxicity(G_1 \otimes G_2) 	&=& \boxicity G_1 + \boxicity G_2, & \textnormal{and }\\ 
	\cubicity(G_1 \otimes G_2) 	&\geq& \cubicity G_1 + \cubicity G_2.
\end{array}
\]
\end{observation}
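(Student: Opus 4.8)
The plan is to prove the boxicity equality by two inequalities and to observe that only the lower-bound half of the argument transfers to cubicity. For the \emph{upper bound} $\boxicity(G_1\otimes G_2)\le \boxicity G_1+\boxicity G_2$, I would start from optimal representations of $G_1$ and $G_2$ in the form given by Definition~\ref{definitionBoxicityInterval}: interval supergraphs $I^1_1,\dots,I^1_{b_1}$ of $G_1$ with $\bigcap_j I^1_j=G_1$ and $I^2_1,\dots,I^2_{b_2}$ of $G_2$ with $\bigcap_k I^2_k=G_2$, where $b_i=\boxicity G_i$. From each $I^1_j$ build a graph $J_j$ on $V(G_1)\cup V(G_2)$ by taking an interval realization of $I^1_j$ and giving every vertex of $G_2$ one common closed interval large enough to contain all the intervals used for $V(G_1)$; symmetrically build $J'_k$ from $I^2_k$. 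Each $J_j$ and $J'_k$ is an interval graph and a supergraph of $G_1\otimes G_2$. Intersecting all of them, the induced graph on $V(G_1)$ is $\bigcap_j I^1_j\cap(\text{complete})=G_1$, on $V(G_2)$ it is $(\text{complete})\cap\bigcap_k I^2_k=G_2$, and every cross pair is an edge in all the $J_j$ and $J'_k$; hence the intersection is exactly $G_1\otimes G_2$, using $b_1+b_2$ interval graphs. This construction fails for cubicity precisely because a unit interval cannot be enlarged to meet an arbitrarily spread-out family of intervals, which is exactly why only $\ge$ is claimed in the cubicity line.

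For the \emph{lower bound}, suppose $G_1\otimes G_2=H_1\cap\dots\cap H_k$ with each $H_i$ an interval graph (respectively a unit interval graph, for the cubicity statement). Each $H_i$ is a supergraph of $G_1\otimes G_2$, so it contains every edge between $V(G_1)$ and $V(G_2)$. The crucial step, already flagged in the text, is: if some $H_i$ contained a non-edge $u_1v_1$ inside $V(G_1)$ and a non-edge $u_2v_2$ inside $V(G_2)$, then $\{u_1,u_2,v_1,v_2\}$ would induce the $4$-cycle $u_1u_2v_1v_2$ in $H_i$, which is impossible because interval graphs (hence also unit interval graphs) are chordal and in particular $C_4$-free. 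Therefore each $H_i$ induces a complete graph on $V(G_1)$ or on $V(G_2)$; partition $[k]$ into $A$ (those inducing a clique on $V(G_1)$) and $B$ (the rest, which induce a clique on $V(G_2)$), breaking ties arbitrarily. Every non-edge of $G_1$ is a non-edge of $G_1\otimes G_2$, so it is a non-edge of some $H_i$; that $i$ cannot lie in $A$, so it lies in $B$. Hence $\bigcap_{i\in B}H_i[V(G_1)]=G_1$, and since each $H_i[V(G_1)]$ is an interval graph (resp.\ unit interval graph) by Observation~\ref{observationInducedSubgraph}, we get $\boxicity G_1\le|B|$ (resp.\ $\cubicity G_1\le|B|$). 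Symmetrically $\boxicity G_2\le|A|$ (resp.\ $\cubicity G_2\le|A|$). Adding gives $b_1+b_2\le|A|+|B|=k$; minimizing over $k$ yields $\boxicity(G_1\otimes G_2)\ge\boxicity G_1+\boxicity G_2$ and the analogous cubicity inequality, and combining with the upper bound closes the boxicity case.

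I expect the only genuinely non-obvious ingredient to be the $C_4$ observation together with the realization that it forces the clean $A/B$ partition of the supergraphs; the rest is bookkeeping with Definition~\ref{definitionBoxicityInterval}. The only routine point needing a word of care is the degenerate situation where $G_1$ or $G_2$ is complete, so that $B$ or $A$ may be empty and the corresponding "intersection'' is vacuous: there the relevant boxicity/cubicity is $0$ and the inequality holds trivially, so no separate treatment is required.
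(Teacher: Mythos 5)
Your proposal is correct and follows exactly the route the paper intends: the paper leaves Observation~\ref{observationJoin} unproved except for the one-line hint that interval graphs are $C_4$-free, so each interval supergraph of $G_1\otimes G_2$ must induce a clique on $V(G_1)$ or on $V(G_2)$, and your $A/B$ partition plus the product-style upper bound is precisely the standard expansion of that hint. The only cosmetic quibble is that you cite Observation~\ref{observationInducedSubgraph} for the (trivially true, but differently stated) fact that induced subgraphs of (unit) interval graphs are (unit) interval graphs; everything else, including the degenerate empty-part case, is handled correctly.
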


\begin{observation}
\label{observationUniversalVertices}
Let $G$ be a graph and $S$ be a set of vertices outside $V(G)$. Then $G \otimes S$ denotes the join of $G$ and a complete graph on $S$, that is, $V(G \otimes S) = V(G) \cup S$ and $E(G \otimes S) = E(G) \cup \{\{v,s\} : v \in V(G) \cup S, s \in S \}$. The boxicity of $G \otimes S$ is equal to the boxicity of $G$.
\end{observation}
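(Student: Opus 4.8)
The plan is to prove the two inequalities $\boxicity(G) \le \boxicity(G \otimes S)$ and $\boxicity(G \otimes S) \le \boxicity(G)$ separately. The first is immediate: if $S \ne \emptyset$, then $G$ is an induced subgraph of $G \otimes S$, so Observation~\ref{observationInducedSubgraph} gives $\boxicity(G) \le \boxicity(G \otimes S)$; and if $S = \emptyset$ the statement is trivial. So the content is in the reverse inequality.

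For that, I would start from an optimal box representation of $G$ and extend it by giving every new vertex a box that engulfs the whole picture. Let $k = \boxicity(G)$. If $k = 0$ then $G$ is complete, hence so is $G \otimes S$, and there is nothing to prove; so assume $k \ge 1$ and let $f$ be a $k$-box representation of $G$. Since $V(G)$ is finite, the set $\bigcup_{v \in V(G)} f(v)$ is a bounded subset of $\R^k$ and therefore lies inside some axis-parallel $k$-box $B$. Define $f'$ on $V(G \otimes S)$ by $f'(v) = f(v)$ for $v \in V(G)$ and $f'(s) = B$ for every $s \in S$. I would then verify that $f'$ is a $k$-box representation of $G \otimes S$: among vertices of $V(G)$ nothing has changed and the edges of $G \otimes S$ inside $V(G)$ are exactly those of $G$; for $v \in V(G)$ and $s \in S$ we have $f'(v) \cap f'(s) = f(v) \cap B = f(v) \ne \emptyset$, matching $vs \in E(G \otimes S)$; and for distinct $s, s' \in S$ we have $f'(s) \cap f'(s') = B \ne \emptyset$, matching $ss' \in E(G \otimes S)$. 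Hence $\boxicity(G \otimes S) \le k$, and combining the two inequalities yields the claimed equality.

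An equivalent route goes through Lemma~\ref{lemmaRoberts}: given interval supergraphs $I_1, \ldots, I_k$ of $G$ with $\bigcap_j I_j = G$, add the vertices of $S$ to each $I_j$ as universal vertices (an interval graph stays an interval graph under this operation — assign each added vertex an interval spanning all the existing ones), and observe that the intersection of the enlarged graphs is precisely $G \otimes S$, since every non-edge of $G \otimes S$ has both endpoints in $V(G)$ and is therefore missed by at least one $I_j$. I do not anticipate any real obstacle here; the only things that need a word of care are the degenerate cases ($S = \emptyset$, or $\boxicity(G) = 0$) and the trivial remark that a finite union of boxes is bounded, so that the enclosing box $B$ exists. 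It is worth noting that this argument is genuinely special to boxicity: the analogue for cubicity fails, since $B$ would have to be a cube, which is exactly why Observation~\ref{observationJoin} records only a one-sided bound for cubicity.
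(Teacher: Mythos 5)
Your proof is correct and is essentially the argument the paper leaves implicit (the observation is stated without proof, and it is also an immediate special case of Observation~\ref{observationJoin} with $G_2$ complete): the lower bound via Observation~\ref{observationInducedSubgraph}, and the upper bound by assigning every vertex of $S$ a single bounding box containing all the boxes of $G$. Your closing remark about why the argument does not transfer to cubicity is also accurate.
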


% Note that Observation \ref{observationUniversalVertices} is applicable only in the case of boxicity.

\begin{observation}
\label{observationClaw}
A {\em star graph} $S_n$ with {\em root} $r$ is the graph with the vertex set $\{r\} \cup [n]$ and edge set $\{\{r,l\} : l \in [n]\}$. The cubicity of $S_n$ is $\ceil{ \log n }$ while its boxicity is $1$.
\end{observation}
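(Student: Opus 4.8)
The plan is to treat boxicity and cubicity separately, the former being immediate. For boxicity, assuming $n \geq 2$ so that $S_n$ is not complete, I would simply exhibit an interval representation: assign the root $r$ the interval $[0,n]$ and each leaf $\ell \in [n]$ the degenerate closed interval $[\ell,\ell]$. Then $r$ meets every leaf while distinct leaves are pairwise disjoint, so $S_n$ is a non-complete interval graph and $\boxicity(S_n) = 1$ by Definition~\ref{definitionBoxicityInterval}.

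For the upper bound $\cubicity(S_n) \leq \ceil{\log n}$, put $k = \ceil{\log n}$ and fix an injective labelling $\ell \mapsto v(\ell) \in \{0,1\}^k$ of the $n$ leaves, possible since $n \leq 2^k$. For each coordinate $j \in [k]$ I would build a unit interval supergraph $I_j$ of $S_n$ by giving the root the unit interval $[0,1]$, a leaf $\ell$ the interval $[-1,0]$ when $v(\ell)_j = 0$, and the interval $[1,2]$ when $v(\ell)_j = 1$. In $I_j$ the root touches every leaf (at the point $0$ or the point $1$), so $I_j \supseteq S_n$; and two leaves are adjacent in $I_j$ precisely when their labels agree in coordinate $j$. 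Hence in $\bigcap_{j=1}^{k} I_j$ every root--leaf pair survives, while any pair of distinct leaves is destroyed in a coordinate where their labels differ; thus $\bigcap_j I_j = S_n$, and Lemma~\ref{lemmaRoberts} gives the bound.

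The real content is the matching lower bound $\cubicity(S_n) \geq \ceil{\log n}$. Suppose $I_1, \dots, I_k$ are unit interval supergraphs of $S_n$ with $\bigcap_j I_j = S_n$; rescale so that every interval has length $1$, and in coordinate $j$ let the root receive $[p_j, p_j + 1]$ and leaf $\ell$ receive $[x_j(\ell), x_j(\ell)+1]$. Adjacency of root and leaf forces $x_j(\ell) \in [p_j - 1, p_j + 1]$. I would then associate to each leaf the vector $\varepsilon(\ell) \in \{0,1\}^k$ with $\varepsilon_j(\ell) = 0$ if $x_j(\ell) < p_j$ and $\varepsilon_j(\ell) = 1$ otherwise. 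The crux is that $\varepsilon_j(\ell) = \varepsilon_j(\ell')$ forces $|x_j(\ell) - x_j(\ell')| \leq 1$: in the $0$-case both values lie in $[p_j - 1, p_j)$, and in the $1$-case both lie in $[p_j, p_j+1]$, so in either case the two leaf intervals intersect, i.e.\ $\ell$ and $\ell'$ are adjacent in $I_j$. Hence leaves with $\varepsilon(\ell) = \varepsilon(\ell')$ are adjacent in every $I_j$ and therefore adjacent in $S_n$, impossible for distinct leaves; so $\varepsilon$ is injective on the $n$ leaves, giving $n \leq 2^k$ and $k \geq \ceil{\log n}$. The one point I would be careful about is the boundary case: when both leaf coordinates fall in the closed piece $[p_j, p_j+1]$, the extreme configuration $x_j(\ell) = p_j$, $x_j(\ell') = p_j + 1$ still produces two closed unit intervals meeting at the single point $p_j + 1$ — which is exactly why the threshold defining $\varepsilon_j$ must sit at $p_j$. (For the degenerate case $n = 1$, $S_1 = K_2$ is complete, consistent with $\ceil{\log 1} = 0$.)
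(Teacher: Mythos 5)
Your proof is correct and complete in all three parts: the interval representation for boxicity, the binary-labelling construction of $\ceil{\log n}$ unit interval supergraphs for the upper bound, and the pigeonhole argument assigning each leaf a sign vector relative to the root's interval for the lower bound (including the careful handling of the closed-interval boundary case). The paper itself states this as an Observation with no proof at all --- it is the classical fact about the cubicity of stars going back to Roberts --- so there is no authorial argument to compare against; your write-up is the standard one and fills the gap the paper leaves to the reader. The only cosmetic caveat, which you already flag, is that for $n=1$ the graph is complete and the boxicity is $0$ rather than $1$, so the statement implicitly assumes $n \geq 2$.
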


%%%%%%%%%%%%%%%%%%%%%%%%%%%%%%%%%%%%%%%%%%%%%%%%%%%%%%%%%%%%%%%%%%%%%%%%%%%%%%%%
\def\thesection{Appendix \Alph{section}}
\section{Proofs}
\def\thesection{\Alph{section}}
\label{sectionProofs}
%%%%%%%%%%%%%%%%%%%%%%%%%%%%%%%%%%%%%%%%%%%%%%%%%%%%%%%%%%%%%%%%%%%%%%%%%%%%%%%%

%%%%%%%%%%%%%%%%%%%%%%%%%%%%%%%%%%%%%%%%%%%%%%%%%%%%%%%%%%%%%%%%%%%%%%%%%%%%%%%%
\subsection{Strong products}
%%%%%%%%%%%%%%%%%%%%%%%%%%%%%%%%%%%%%%%%%%%%%%%%%%%%%%%%%%%%%%%%%%%%%%%%%%%%%%%%

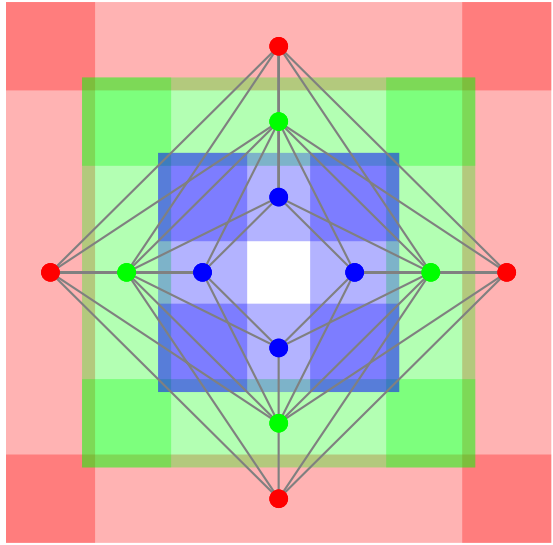
\begin{figure}[h]
\begin{center}
\psset{xunit=0.5cm}
\psset{yunit=0.5cm}

\begin{pspicture}(0,0)(16,16)
% \psframe(0,0)(16,16)
	
	\psframe[fillstyle=solid,opacity=0.3,linestyle=none,framearc=0,fillcolor=red]%
		(0.8,0.8)(3.2,15.2)
	\psframe[fillstyle=solid,opacity=0.3,linestyle=none,framearc=0,fillcolor=red]%
		(0.8,0.8)(15.2,3.2)
	\psframe[fillstyle=solid,opacity=0.3,linestyle=none,framearc=0,fillcolor=red]%
		(12.8,0.8)(15.2,15.2)
	\psframe[fillstyle=solid,opacity=0.3,linestyle=none,framearc=0,fillcolor=red]%
		(0.8,12.8)(15.2,15.2)

	\psframe[fillstyle=solid,opacity=0.3,linestyle=none,framearc=0,fillcolor=green]%
		(2.8,2.8)(5.2,13.2)
	\psframe[fillstyle=solid,opacity=0.3,linestyle=none,framearc=0,fillcolor=green]%
		(2.8,2.8)(13.2,5.2)
	\psframe[fillstyle=solid,opacity=0.3,linestyle=none,framearc=0,fillcolor=green]%
		(10.8,2.8)(13.2,13.2)
	\psframe[fillstyle=solid,opacity=0.3,linestyle=none,framearc=0,fillcolor=green]%
		(2.8,10.8)(13.2,13.2)

	\psframe[fillstyle=solid,opacity=0.3,linestyle=none,framearc=0,fillcolor=blue]%
		(4.8,4.8)(7.2,11.2)
	\psframe[fillstyle=solid,opacity=0.3,linestyle=none,framearc=0,fillcolor=blue]%
		(4.8,4.8)(11.2,7.2)
	\psframe[fillstyle=solid,opacity=0.3,linestyle=none,framearc=0,fillcolor=blue]%
		(8.8,4.8)(11.2,11.2)
	\psframe[fillstyle=solid,opacity=0.3,linestyle=none,framearc=0,fillcolor=blue]%
		(4.8,8.8)(11.2,11.2)

	\pspolygon[linecolor=gray](8,2)(2,8)(8,14)(14,8)
	\pspolygon[linecolor=gray](8,4)(4,8)(8,12)(12,8)
	\pspolygon[linecolor=gray](8,6)(6,8)(8,10)(10,8)
	\psline[linecolor=gray]
	(8,2)(8,4)(2,8)(4,8)(8,14)(8,12)(14,8)(12,8)(8,2)(4,8)(2,8)(8,12)(8,14)(12,8)(14,8)(8,4)
	\psline[linecolor=gray]
	(8,6)(8,4)(6,8)(4,8)(8,10)(8,12)(10,8)(12,8)(8,6)(4,8)(6,8)(8,12)(8,10)(12,8)(10,8)(8,4)

	\psdots[dotstyle=o,dotsize=7pt,fillstyle=solid,linecolor=red,fillcolor=red]%
		(8,2)(2,8)(8,14)(14,8)
	\psdots[dotstyle=o,dotsize=7pt,fillstyle=solid,linecolor=green,fillcolor=green]%
		(8,4)(4,8)(8,12)(12,8)
	\psdots[dotstyle=o,dotsize=7pt,fillstyle=solid,linecolor=blue,fillcolor=blue]%
		(8,6)(6,8)(8,10)(10,8)

\end{pspicture}
\end{center}
\caption{The graph $C_4 \strong P_3$ and its $2$-box representation. Every box represents the vertex of the same colour at its center.}
\label{figureC4P3}
\end{figure}
\begin{restatement}[Proof of Theorem \ref{theoremStrongProduct}]
Let $G_i$, $i \in [d]$, be graphs with $\boxicity(G_i) = b_i$ and $\cubicity(G_i) = c_i$. Then
\[
\begin{array}{rcccl}
\max_{i=1}^{d} b_i & \leq & 
	\boxicity(\strong_{i=1}^{d} G_i)  & \leq &
	\sum_{i=1}^{d} b_i, \AND \\ 
\max_{i=1}^{d} c_i & \leq & 
	\cubicity(\strong_{i=1}^{d} G_i)  & \leq &
	\sum_{i=1}^{d} c_i. 
\end{array}
\]
Furthermore, if each $G_i$, $i \in [d]$ has a universal vertex, then the second inequality in both the above chains is tight.
\end{restatement}
\begin{proof}
The lower bounds follow easily since the component graphs are present as induced subgraphs in the product. Let $G = \strong_{i=1}^d G_i$ and $b = \sum_{i=1}^d b_i$. Furthermore, let $f_i$ be a $b_i$-box representation of $G_i$, $i \in [d]$. It is easy to see that $f$ defined by $f((v_1, \ldots, v_d)) = f(v_1) \times \cdots \times f(v_d)$, (where $\times$ denotes the Cartesian product) is a $b$-box representation for $G$. The case for cubicity is also similar.

Let $u_i$ be a universal vertex of $G_i$ for each $i \in [d]$. Now for each $i \in [d]$, set $A_i = \{(a_1, \ldots, a_d) \in V(G) : a_i \in V(G_i) \AND a_j = u_j \IF j \neq i \}$ so that $G[A_i]$ is isomorphic to $G_i$. Since interval graphs do not contain induced $4$-cycles, in any interval supergraph of $G$, all but at most one set among $A_i$, $i \in [d]$, must induce a complete graph. Hence the boxicity (cubicity) of $G$ is at least $\sum_{i=1}^d b_i$ ($\sum_{i=1}^d c_i$).
\end{proof}
If we consider the strong product of a $4$-cycle $C_4$ with a path on $3$ vertices $P_3$, we get an example where the upper bound in Theorem \ref{theoremStrongProduct} is not tight. It is easy to check that $\boxicity(C_4) = 2$ and $\boxicity(P_3) = 1$. Figure \ref{figureC4P3} shows a $2$-box representation of $C_4 \strong P_3$.

%%%%%%%%%%%%%%%%%%%%%%%%%%%%%%%%%%%%%%%%%%%%%%%%%%%%%%%%%%%%%%%%%%%%%%%%%%%%%%%%
\subsection{Cartesian products}
%%%%%%%%%%%%%%%%%%%%%%%%%%%%%%%%%%%%%%%%%%%%%%%%%%%%%%%%%%%%%%%%%%%%%%%%%%%%%%%%

\begin{restatement}[Proof of Theorem \ref{theoremCartesianStrong}]
For graphs $G_1, \ldots, G_d$, 
\begin{eqnarray*}
\boxicity(\cart_{i=1}^d G_i) & \leq & 
	\boxicity(\strong_{i=1}^d G_i) + 
	\boxicity(\cart_{i=1}^d K_{\chi_i})
	\textnormal{ and} \\
\cubicity(\cart_{i=1}^d G_i) & \leq & 
	\cubicity(\strong_{i=1}^d G_i) + 
	\cubicity(\cart_{i=1}^d K_{\chi_i})
\end{eqnarray*}
where $\chi_i$ denotes the chromatic number of $G_i, i \in [d]$.
\end{restatement}
\begin{proof}
Let $G_{\cart} = \cart_{i=1}^d G_i$, $G_{\strong} = \strong_{i=1}^d G_i$ and $K_{\cart} = \cart_{i=1}^d K_{\chi_i}$. Let $b_s = \boxicity(G_{\strong})$ and $b_{\chi} = \boxicity(K_{\cart})$. Furthermore, let $f_s$ and $f_{\chi}$ be $b_s$-box and $b_{\chi}$-box representations of $G_{\strong}$ and $K_{\cart}$, respectively. Finally, let $c_i : V(G_i) \into [\chi_i]$ be a proper colouring of $G_i, i \in [d]$. It is easy to see that $f$ defined by $f((v_1, \ldots, v_d)) = f_s((v_1, \ldots, v_d)) \times f_{\chi}((c_1(v_1), \ldots, c_d(v_d)))$, is a $(b_s + b_{\chi})$-box representation for $G_{\cart}$. The case for cubicity is also similar.
\end{proof}

\begin{restatement}[Proof of Theorem \ref{theoremMaxCubicity}]
For graphs $G_1, \ldots, G_d$, with $|V(G_i)| = q_i$ and $\cubicity(G_i) = c_i$, for each $i \in [d]$, 
\[
\begin{array}{rcl}
\boxicity(\cart_{i=1}^d G_i)  & \leq &	
	\max_{i \in [d]} c_i  + \boxicity(\cart_{i=1}^d K_{q_i}), 
	\AND \\
\cubicity(\cart_{i=1}^d G_i)  & \leq &	
	\max_{i \in [d]} c_i  + \cubicity(\cart_{i=1}^d K_{q_i}).
\end{array}
\]
\end{restatement}
\begin{proof}
Let $G = \cart_{i=1}^d G_i$, $K = \cart_{i=1}^d K_{q_i}$, and $c = \max_{i \in [d]} c_i$. We label the vertices of $G_i$ using distinct elements of $[q_i]$. This defines a bijection $l: V(G) \into [q_1] \times \cdots \times [q_d]$. Henceforth, we will identify $v$ with $l(v)$, for all $v \in V(G)$. We do the same for $K$.

For a $d$-cube $C = [c_1, c_1+1] \times \cdots \times [c_d, c_d + 1] \subset \R^d$, we call the point $(c_1, \ldots, c_d)$ as the {\em origin} of $C$ and denote it by $o(C)$. A  $d$-cube is completely determined by its origin. Two cubes $C_1$ and $C_2$ intersect if and only if $d_{\infty}\big(o(C_1), o(C_2)\big) \leq 1$, where $d_{\infty}(x,y) = \max_{i \in [d]} |x(i) - y(i)|$ is the supremum norm in $\R^d$. Hence we can identify a cube representation of a graph $H$ with an embedding $f: V(H) \into \R^d$ such that $\{u, v\} \in E(H) \iff d_{\infty}(f(u), f(v)) \leq 1$. We will call $f$ as a {\em cube embedding} of $H$.

Let $f_i : V(G_i) \into \R^c$ be a cube embedding of $G_i$ for each $i \in [d]$, which exists since $\cubicity(G_i) = c_i \leq c$. Define $F : V(G) \into \R^c$ by $F((v_1, \ldots, v_d)) = f_1(v_1) + \cdots + f_d(v_d)$. Let $H$ be the graph on the vertex set $V(G)$ whose cube representation is $F$. We will show that $H \cap K = G$. Then both the assertions in the theorem will follow from Observation \ref{observationIntersection}.

It is easy to see that $K$ is a supergraph of $G$. We show that $H$ is also a supergraph of $G$. If $x,y$ are adjacent vertices in $G$, then they differ in exactly one position, say $j \in [d]$ and $\{x(j), y(j)\} \in E(G_j)$. Hence, $d_{\infty}(F(x), F(y)) = d_{\infty}(f_j(x(j)), f_j(y(j))) \leq 1$ making $x$ adjacent with $y$ in $H$. 

A pair of distinct non-adjacent vertices $x, y \in V(G)$ is called a {\em layer non-edge} if the $d$-tuples $x$ and $y$ differ in exactly one position and a {\em cross non-edge} otherwise. All the cross non-edges in $G$ are non-adjacent in $K$. We complete the proof by showing that all the layer non-edges in $G$ are non-adjacent in $H$. Let $\{x,y\}$ be a layer non-edge in $G$, i.e., $x$ and $y$ differ in only one position, say $j \in [d]$ and $\{x(j), y(j)\} \notin E(G_j)$. Hence $d_{\infty}(F(x), F(y)) = d_{\infty}(f_j(x(j)), f_j(y(j))) > 1$, and hence $x$ is not adjacent to $y$ in $H$.
\end{proof}

%%%%%%%%%%%%%%%%%%%%%%%%%%%%%%%%%%%%%%%%%%%%%%%%%%%%%%%%%%%%%%%%%%%%%%%%%%%%%%%%
\subsubsection{Hypercubes}
%%%%%%%%%%%%%%%%%%%%%%%%%%%%%%%%%%%%%%%%%%%%%%%%%%%%%%%%%%%%%%%%%%%%%%%%%%%%%%%%

Our upper bound on boxicity of hypercubes uses a result from the theory of partial order dimensions.

\begin{definition}[Poset dimension]
Let $(P, \lhd)$ be a poset (partially ordered set). A {\em linear extension} $L$ of $P$ is a total order which satisfies $(x \lhd y \in P) \implies (x \lhd y \in L)$. A {\em realiser} of $P$ is a set of linear extensions of $P$, say $\mathcal{R}$, which satisfy the following condition: for any two distinct elements $x$ and $y$, $x\lhd y \in P$ if and only if $x \lhd y \in L$, $\forall L \in \mathcal{R}$.  The \emph{poset dimension} of $P$, denoted by $\pdim(P)$, is the minimum positive integer $k$ such that there exists a realiser of $P$ of cardinality $k$. 
\end{definition}

Among the several consequences of the connection between boxicity and poset dimension established in \cite{DiptAdiga}, the one that we will use here is the following.

\begin{theorem}[\cite{DiptAdiga}]
\label{theoremBoxicityPoset}
Let $G$ be a bipartite graph with parts $A$ and $B$. Let $(\calP, \lhd)$ be the poset on $A \cup B$, with $a \lhd b$ if $a \in A$, $b \in B$ and $\{a,b\} \in E(G)$. Then
\[
	\half \pdim (\calP) \leq \boxicity (G) \leq \pdim (\calP).
\]
\end{theorem}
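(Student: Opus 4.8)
The plan is to prove the two inequalities separately, both resting on a single structural observation: since the only comparabilities of $\calP$ are the edges $a \lhd b$ of $G$ (with $a \in A$, $b \in B$), the incomparable pairs of $\calP$ are \emph{exactly} the non-edges of $G$, and these fall into three families --- pairs inside $A$, pairs inside $B$, and the cross non-edges between $A$ and $B$. By Definition~\ref{definitionBoxicityInterval}, a box representation of $G$ is a collection of interval supergraphs $I_1, \ldots, I_k$ with $G = I_1 \cap \cdots \cap I_k$, so every non-edge of $G$ (equivalently, every incomparable pair of $\calP$) is \emph{separated} by some $I_j$, meaning the two corresponding intervals are disjoint in the interval representation of $I_j$. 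This is the bridge I will use in both directions.

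For the upper bound $\boxicity(G) \le \pdim(\calP)$, I would start from a realiser $L_1, \ldots, L_t$ of $\calP$ with $t = \pdim(\calP)$ and manufacture $t$ interval graphs whose intersection is $G$. To each $L_k$ I attach an interval graph $I_k$ by placing the vertices of one side at their ranks as \emph{points} and giving each vertex of the other side the interval spanning from (the rank of) its extreme neighbour to its own rank. Because distinct points never meet, whichever side is pointed has its entire family of same-side non-edges destroyed in $I_k$; because $L_k$ extends $\calP$, every edge $ab$ of $G$ still has its point lying inside the spanning interval, so $I_k \supseteq G$. A cross non-edge $\{a,b\}$ is incomparable, so some $L_k$ ranks $b$ before $a$, and in that $I_k$ (of either type) the two images are disjoint. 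The one genuinely delicate point is that the same-side non-edges on $A$ and on $B$ must \emph{both} be eliminated, so I would designate one extension to be ``pointed on $A$'' and another ``pointed on $B$'' (possible as soon as $t \ge 2$), the rest being of either type. This makes $\bigcap_k I_k = G$ and gives $\boxicity(G) \le t$.

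For the lower bound $\half \pdim(\calP) \le \boxicity(G)$, I would take an optimal box representation $I_1, \ldots, I_k$ of $G$ ($k = \boxicity(G)$) and extract \emph{two} linear extensions of $\calP$ from each $I_j$, so that the resulting $2k$ orders form a realiser, yielding $\pdim(\calP) \le 2k$. From the interval representation $x \mapsto [l_j(x), r_j(x)]$ of $I_j$ (endpoints perturbed to be distinct) I read off the order $L_j$ in which each $a \in A$ is keyed by its \emph{left} endpoint $l_j(a)$ and each $b \in B$ by its \emph{right} endpoint $r_j(b)$: since overlapping intervals force $l_j(a) \le r_j(b)$, this $L_j$ places $a$ before $b$ for every edge and is therefore a genuine linear extension of $\calP$. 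The companion extension $M_j$ is obtained by applying the identical rule to the \emph{reflected} representation $x \mapsto [-r_j(x), -l_j(x)]$ of the same graph $I_j$. The reflection is the crucial device: for any pair separated by $I_j$ --- in any of the three families above --- the keys used by $L_j$ and by $M_j$ are governed by opposite endpoints, so $L_j$ and $M_j$ order that pair in \emph{opposite} directions, while both continue to respect all comparabilities of $\calP$. Hence every incomparable pair of $\calP$, being a non-edge separated by some $I_j$, is reversed by the pair $\{L_j, M_j\}$, and the family $\{L_j, M_j : j \in [k]\}$ realises $\calP$.

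The main obstacle I anticipate is precisely the same-side non-edges: they cannot be resolved by the naive ``half-line'' interval graphs that handle only the cross relations, which is why the upper bound needs the pointed-side trick with two dedicated extensions and the lower bound needs the reflected representation to supply the reverse orientation that a single interval graph cannot produce. Finally I would dispose of the degenerate case in which $\calP$ is a chain (forcing $|A| \le 1$ and $|B| \le 1$, so $G$ is a single edge or smaller): there $\boxicity(G) = 0$ while $\pdim(\calP) = 1$, and the bounds are read with the usual convention that renders them vacuous, so this case may simply be set aside.
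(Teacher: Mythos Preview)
The paper does not prove this theorem; it is quoted from \cite{DiptAdiga} and used as a black box in the proof of Theorem~\ref{theoremHypercubeBooleanLattice}. There is therefore no in-paper argument to compare against.

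That said, your plan is sound and is essentially the standard route to this result. For the upper bound, the pointed-side construction works exactly as you describe: once one extension is pointed on $A$ and another on $B$, all three families of non-edges are killed, and your verification that a cross non-edge is separated in $I_k$ \emph{regardless of which side is pointed} (because in the offending $L_k$ the rank of $b$ lies strictly below the rank of $a$) is the right observation. For the lower bound, keying $A$ by left endpoints and $B$ by right endpoints does produce a linear extension of $\calP$, and pairing each $I_j$ with its reflection is precisely what is needed to guarantee that every pair separated by $I_j$---including same-side pairs---appears in both orders among $\{L_j, M_j\}$. One quick sanity check worth writing out explicitly: in the cross subcase where $b$'s interval lies entirely to the left of $a$'s, you get $b<a$ in $L_j$, and in $M_j$ the keys become $-r_j(a)$ and $-l_j(b)$ with $l_j(b)<r_j(a)$, giving $a<b$; so the reversal holds in both subcases, not just the one you spelled out.

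The only place to tighten is the degenerate case $\pdim(\calP)=1$. There $G$ is a single edge or smaller and $\boxicity(G)=0$, so the left inequality $\half\le 0$ is literally false; rather than invoke an unnamed convention, state explicitly that the theorem is meant for $G$ not complete (equivalently $\calP$ not a chain), which is how \cite{DiptAdiga} handles it.
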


There is a natural poset associated with the hypercube called the {\em Boolean lattice}.

\begin{definition}
The $d$-dimensional {\em Boolean lattice}, denoted by $(B_d, \lhd)$, is the poset on $V(K_2^d)$ such that $u \lhd v$ if and only if $u(i) \leq v(i), ~\forall i \in [d]$. The {\em Hamming weight} of a vertex $v$ in $K_2^d$, denoted by $h(v)$, is the number of ones in $v$. The set $B_d(i) = \{v \in V(K_2^d) : h(v) = i\}, i \in \{0, \ldots, d\}$ is called the {\em $i$-th layer} of $B_d$. The subposet of $B_d$ induced on layers $i$ and $j$, $i < j$, is denoted by $B_d(i,j)$.
\end{definition}

The poset dimension of $B_d(i,j)$ for various choices of $i$ and $j$ has been a subject of extensive study starting from the study of $B_d(1,2)$ by Ben Dushnik in 1947 \cite{dushnik}. Later, Joel Spencer showed that the poset dimension of $B_d(1,2)$ is $(1 + o(d)) \log\log d$ \cite{scramble}. The current best upper bound for $B_d(j-1, j),  j \in [d]$, i.e., the subposet induced on two neighbouring layers of $B_d$, is $\order{ \log d / \log\log d }$ due to Kostochka \cite{kostochka1997dimension}. The best known lower bound for the same is $\orderatleast{ \log\log d }$, which follows from the result of Spencer mentioned above.

We use the $3 \ln d / \ln\ln d$ upper bound on $\pdim B_d(j-1, j)$ to prove the  upper bound on the boxicity of hypercubes given in Theorem \ref{theoremHypercubeBooleanLattice}. 

\begin{restatement}[Proof of Theorem \ref{theoremHypercubeBooleanLattice}]
Let $b_d$ be the largest dimension possible of a poset formed by two adjacent levels of a Boolean lattice over a universe of $d$ elements. Then
\[
\begin{array}{rcccl}
\half b_d &\leq& 
	\boxicity (K_2^d) &\leq& 
		3b_d.
% 		& \textnormal{ and hence}
% 		\medskip \\
% \frac{ \ceil{ \log \log d } + 1 }{2} &\leq&
% 	\boxicity(K_2^d)  &\leq& 
% 	12 \log d / \log \log d. &
\end{array}
\]
Furthermore, $\boxicity(K_2^d) \leq 12 \log d / \log\log d$.
\end{restatement}
\begin{proof}
In order not to introduce more notation, we will (ab)use the same notation for a poset and its underlying (comparability) graph. Let $H = K_2^d$.

The lower bound follows from Theorem \ref{theoremBoxicityPoset} and Observation \ref{observationInducedSubgraph} since the graph $B_d(j-1, j)$ is an induced subgraph of $K_2^d$ for all $j \in [d]$. The upper bound will be proved by showing the existence of $3$ graphs, $H_0, H_1, H_2$, each of boxicity at most $b_d$ such that $H = H_0 \cap H_1 \cap H_2$. Then the bound follows from Observation \ref{observationIntersection}.

Let $(V_0, V_1, V_2)$ be a partition of $V(H)$ such that $V_k = \{v \in V(H) : h(v) \equiv k \bmod{3}\}$. Let $H_k = H[V_{k+1} \cup V_{k+2}] \otimes V_k, ~k \in \Z_3$, where $H[S]$ denotes the subgraph of $H$ induced on $S$, and the operation $\otimes$ is as described in Observation \ref{observationUniversalVertices}. The graph $H[V_{k+1} \cup V_{k+2}]$ is a disjoint union of the graphs $B_d(j-1,j), j \in [d], j \equiv k+2 \bmod{3}$ and $B_d(0)$ and/or $B_d(d)$ in some cases. Since $B_d(0)$, $B_d(d)$, and $B_d(j-1,j)$ are bipartite graphs, by Theorem \ref{theoremBoxicityPoset}, their boxicities are at most their poset dimensions, which is at most $b_d$. Hence the boxicity of $H[V_{k+1} \cup V_{k+2}]$ is at most $b_d$ by Observation \ref{observationDisjointUnion}. Therefore, by Observation \ref{observationUniversalVertices}, the boxicity of $H_k$ is at most $b_d$ for every $k \in \Z_3$.  

We complete the proof by showing that $H = H_0 \cap H_1 \cap H_2$. It is easy to see that each $H_k, k \in \Z_3$ is a supergraph of $H$. Hence we only need to show that if $u$ and $v$ is an arbitrary pair of non-adjacent vertices in $H$, then they are non-adjacent in at least one $H_k, k \in \Z_3$. Let $k \in \Z_3 \setminus \{h(u) \bmod{3}, h(v) \bmod{3} \}$. Then $u,v \in V_{k+1} \cup V_{k+2}$ and hence they remain non-adjacent in $H_k$. 

Hence $\boxicity(K_2^d) \leq 12 \log d/ \log\log d$, by Kostochka's result.
\end{proof}

%%%%%%%%%%%%%%%%%%%%%%%%%%%%%%%%%%%%%%%%%%%%%%%%%%%%%%%%%%%%%%%%%%%%%%%%%%%%%%%%
\subsubsection{Hamming graphs}
%%%%%%%%%%%%%%%%%%%%%%%%%%%%%%%%%%%%%%%%%%%%%%%%%%%%%%%%%%%%%%%%%%%%%%%%%%%%%%%%

In order to extend the bounds on boxicity and cubicity of hypercubes to Hamming graphs we need to introduce some more notation. The vertices of the Hamming graph $K_q^d$ will be labelled by elements of $[q]^d$ in the natural way. Hence two vertices are adjacent if and only if their {\em Hamming distance}, i.e., the number of positions in which their labels differ, is exactly $1$. For a vertex $u$ in $K_q^d$ and for any $i \in [d]$, we shall use $u(i)$ to denote the $i$-th coordinate of the label of $u$. 

\begin{definition}[Weak Homomorphism]
\label{definitionWeakHomomorphism}
Given two graphs $G$ and $H$, a function $f : V(G) \into V(H)$ is called a {\em weak homomorphism} if for every $\{u, v\} \in E(G)$ either \\ $\{f(u),f(v)\} \in E(H)$ or $f(u) = f(v)$.
\end{definition}

\begin{remark}
If $H^o$ denotes the graph $H$ with a self-loop added at every vertex, then a weak homomorphism from $G$ to $H$ is a standard homomorphism from $G$ to $H^o$.
\end{remark}

\begin{definition}[$H$-Realiser]
A family $\calF$ of weak homomorphisms from $G$ to $H$ is called an {\em $H$-realiser} of $G$ if for every $u, v \in V(G)$ such that $\{u, v\} \notin E(G)$, there exists an $f \in \calF$ such that $f(u) \neq f(v)$ and $\{f(u), f(v)\} \notin E(H)$. If $G$ has an $H$-realiser then the cardinality of a smallest such realiser is called the {\em $H$-dimension} of $G$ and is denoted as $\hdim(G, H)$.
\end{definition}

The following lemma is an easy observation.

\begin{lemma}
\label{lemmaRealiser}
For any graph $G$, if there exists an $H$-realiser of $G$ for some graph $H$, then
\begin{eqnarray*}
\boxicity(G) & \leq & \hdim(G, H) \boxicity(H), \textnormal{ and} \\
\cubicity(G) & \leq & \hdim(G, H) \cubicity(H). 
\end{eqnarray*}
\end{lemma}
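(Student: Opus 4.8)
The plan is to pull back, through each weak homomorphism in a minimum $H$-realiser, an interval (respectively unit interval) representation of $H$, thereby producing interval (unit interval) supergraphs of $G$ whose intersection is exactly $G$; counting these supergraphs yields the bound via Lemma \ref{lemmaRoberts}.

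In detail, let $\calF$ be a minimum $H$-realiser of $G$, so $|\calF| = \hdim(G,H)$, and let $b = \boxicity(H)$ with interval graphs $I_1, \ldots, I_b$ on $V(H)$ such that $H = I_1 \cap \cdots \cap I_b$ (Lemma \ref{lemmaRoberts}); fix an interval representation $g_j$ of each $I_j$. For every $f \in \calF$ and $j \in [b]$, the composition $g_j \circ f$ assigns a closed interval to each vertex of $G$; let $I_j^f$ denote the interval graph it represents, so $\{u,v\} \in E(I_j^f)$ if and only if $f(u) = f(v)$ or $\{f(u),f(v)\} \in E(I_j)$. I would then show $G = \bigcap_{f \in \calF,\, j \in [b]} I_j^f$. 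First, each $I_j^f$ is a supergraph of $G$: if $\{u,v\} \in E(G)$ then, since $f$ is a weak homomorphism, either $f(u) = f(v)$ or $\{f(u),f(v)\} \in E(H) \subseteq E(I_j)$, and in both cases $\{u,v\} \in E(I_j^f)$. Second, for a non-edge $\{u,v\}$ of $G$ with $u \neq v$, the $H$-realiser property supplies an $f \in \calF$ with $f(u) \neq f(v)$ and $\{f(u),f(v)\} \notin E(H)$; since $H = \bigcap_j I_j$, there is some $j$ with $\{f(u),f(v)\} \notin E(I_j)$, and together with $f(u) \neq f(v)$ this gives $\{u,v\} \notin E(I_j^f)$. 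Hence the intersection of the $|\calF|\, b$ interval graphs $I_j^f$ equals $G$, and Lemma \ref{lemmaRoberts} gives $\boxicity(G) \leq |\calF|\, b = \hdim(G,H)\,\boxicity(H)$. The cubicity inequality follows by the identical argument, taking each $I_j$ to be a unit interval graph and observing that $g_j \circ f$ still assigns unit-length intervals, so each $I_j^f$ is a unit interval graph.

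The only subtlety worth flagging is the part played by the clause $f(u) \neq f(v)$ in the definition of an $H$-realiser: any two vertices of $G$ sent to a common vertex of $H$ become adjacent in every pulled-back graph, so a non-adjacent such pair in $G$ must be separated by some other member of $\calF$, which is precisely what the definition guarantees. Beyond this observation, the argument is routine bookkeeping, so I do not anticipate a genuine obstacle.
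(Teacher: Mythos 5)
Your proof is correct, and it is exactly the argument the paper has in mind: the paper states Lemma \ref{lemmaRealiser} as "an easy observation" without proof, and the intended justification is precisely this pull-back of the interval (unit interval) representations of $H$ through each weak homomorphism in the realiser, followed by Lemma \ref{lemmaRoberts}. Your explicit handling of the $f(u)=f(v)$ case (vertices identified by $f$ become adjacent in every pulled-back graph, so the realiser must separate them elsewhere) is the one genuinely non-trivial point, and you address it correctly.
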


\begin{definition}
\label{definitionDoubleDistinguishing}
A family $\calD$ of sets is called {\em double distinguishing} if for any two pairs of set $A,A'$ and $B, B'$ from $\calD$, such that $A \neq A'$ and $B \neq B'$, we have
$$ (A \sdiff  A') \cap (B \sdiff B') \neq \emptyset, $$
where $A \sdiff A'$ denotes the symmetric difference of $A$ and $A'$, i.e., $(A \setminus A') \cup (A' \setminus A)$.
\end{definition}

\begin{lemma}
\label{lemmaDoubleDistinguishing}
For a set $U$, there exists a double distinguishing family $\calD$ of subsets of $U$ with $|\calD| = \floor{ c^{|U|} }$, where $c = (4/3)^{1/4}$.
\end{lemma}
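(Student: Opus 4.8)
The plan is to use the probabilistic method, choosing a family of random subsets of $U$ and showing that with positive probability it is double distinguishing. First I would fix $m = \floor{c^{|U|}}$ with $c = (4/3)^{1/4}$ and pick sets $D_1, \ldots, D_m \subseteq U$ independently and uniformly at random, i.e., each element of $U$ is placed in each $D_k$ independently with probability $1/2$. The family fails to be double distinguishing precisely when there is some bad quadruple of indices $(i,j,k,\ell)$ with $i \neq j$ and $k \neq \ell$ such that $(D_i \sdiff D_j) \cap (D_k \sdiff D_\ell) = \emptyset$. The strategy is to bound the expected number of bad quadruples and show it is less than $1$, so that some realization of the family has no bad quadruple; deleting nothing, that family itself works (or, if one is slightly more careful, one shows the count is less than the number of quadruples so that a fixed choice survives — but in fact showing the probability of any failure is $<1$ suffices directly).

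The key computation is the probability that a single fixed bad event occurs. For a fixed element $x \in U$, the event $x \in D_i \sdiff D_j$ depends only on the coordinates of $D_i, D_j$ at $x$, and has probability $1/2$; similarly $x \in D_k \sdiff D_\ell$ has probability $1/2$. The subtle point is the independence structure: when the four indices $i,j,k,\ell$ are all distinct, these two events are independent, giving $\Pr[x \in (D_i \sdiff D_j) \cap (D_k \sdiff D_\ell)] = 1/4$, hence $\Pr[x \notin \cdots] = 3/4$, and since coordinates at distinct elements are independent, $\Pr[(D_i \sdiff D_j)\cap(D_k \sdiff D_\ell) = \emptyset] = (3/4)^{|U|}$. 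When the pairs $\{i,j\}$ and $\{k,\ell\}$ share exactly one index (say $j = \ell$), a short case check shows the two symmetric-difference indicator events are still such that $\Pr[x \in \text{both}] \geq 1/4$ — in fact one must verify the worst case is no worse than $(3/4)^{|U|}$ — and when $\{i,j\} = \{k,\ell\}$ the event is impossible (the intersection equals $D_i \sdiff D_j \neq \emptyset$ a.s.\ is false only with probability $2^{-|U|}$, which is even smaller, or one simply excludes this degenerate case). So in all relevant cases the failure probability for one quadruple is at most $(3/4)^{|U|}$.

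Then I would apply the union bound: the number of quadruples $(i,j,k,\ell)$ with $i\neq j$, $k \neq \ell$ is at most $m^4$, so the probability that the family is not double distinguishing is at most $m^4 (3/4)^{|U|}$. With $m = \floor{c^{|U|}} \le c^{|U|}$ and $c^4 = 4/3$, we get $m^4 (3/4)^{|U|} \le (c^4 \cdot 3/4)^{|U|} = 1^{|U|} = 1$, and since $m < c^{|U|}$ strictly whenever $c^{|U|}$ is not an integer (and one can absorb the boundary case by a trivial slack, or note the union bound is strict), this probability is strictly less than $1$. Hence a double distinguishing family of the required size exists. The main obstacle I anticipate is the careful case analysis for overlapping index pairs in the probability estimate: one has to confirm that sharing an index between the two pairs does not \emph{increase} the failure probability above $(3/4)^{|U|}$, which requires looking at the joint distribution of the two parity indicators at a single coordinate $x$ across the three distinct indices involved.
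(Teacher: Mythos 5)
Your proposal is correct and follows essentially the same route as the paper: choose $\lfloor c^{|U|}\rfloor$ random subsets by independent fair coin flips, bound the per-element probability of landing in $(A \sdiff A') \cap (B \sdiff B')$ below by $1/4$ (including the shared-index and identical-pair cases, which you resolve the same way), and finish with a union bound over at most $m^4$ quadruples using $c^4 = 4/3$. The case analysis you flag as the main obstacle works out exactly as you anticipate, so there is no gap.
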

\begin{proof}
Let $|U| = n$ and $q = \floor{c^n}$. Construct a family $\calD = \{S_1, \ldots, S_q\}$ of subsets of $U$ by choosing every $u \in U$ to be in $S_i$ with probability $1/2$, independent of every other choice. Given $A, A',B,B' \in \calD$, such that $A \neq A'$ and $B \neq B'$, the probability that a particular $u \in U$ is present in $(A \sdiff A') \cap (B \sdiff B')$ is at least $1/4$ (In fact, it is exactly $1/4$ when $\{A, A'\} \neq \{B, B'\}$ and $1/2$ otherwise). Hence the probability that $(A \sdiff A') \cap (B \sdiff B') = \emptyset$, i.e., the probability that no $u \in U$ goes into $(A \sdiff A') \cap (B \sdiff B')$, is at most $(3/4)^n$. So, by a union bound, the probability $p$ that $\calD$ is not double distinguishing is less than $q^4 (3/4)^n$, which is at most $1$ by our choice of $q$. Hence there exists a double distinguishing family of size $q$.
\end{proof}

Lemma \ref{lemmaDoubleDistinguishing} guarantees that we can label the alphabet $[q]$ using sets from a double distinguishing family $\calD$ of subsets of a small universe $U$ ($|U| \leq \ceil{10 \log q}$). Every element $u \in U$ defines a natural bipartition of the alphabet $[q]$ between sets that contain $u$ and those that do not. Each of those bipartitions gives a weak homomorphism from $K_q^d$ to $K_2^d$. We show that that collection of weak homomorphisms form a $K_2^d$-realiser of $K_q^d$.

\begin{lemma}
\label{lemmaHamming}
Let $K_q^d$ be the d-dimensional Hamming graph on alphabet $[q]$ and let $K_2^d$ be the d-dimensional hypercube. Then for $d \geq 2$,
\[ 
	\half \log q \leq \hdim(K_q^d, K_2^d) \leq \ceil{10 \log q}.
\]
\end{lemma}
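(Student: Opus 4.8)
The plan is to prove the two inequalities separately.

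\textbf{Upper bound.} Fix a universe $U$ with $|U|=\ceil{10\log q}$; a short computation (with $c=(4/3)^{1/4}$, using $\log c=\tfrac14\log\tfrac43>\tfrac1{10}$, so that $c^{|U|}>q$) shows $\floor{c^{|U|}}\ge q$, hence Lemma~\ref{lemmaDoubleDistinguishing} provides distinct sets $S_1,\dots,S_q$ forming a double distinguishing family of subsets of $U$. Label symbol $a\in[q]$ by $S_a$, and for each $u\in U$ define $\phi_u\colon[q]\to\{0,1\}$ by $\phi_u(a)=1$ iff $u\in S_a$, and $\Phi_u\colon V(K_q^d)\to V(K_2^d)$ coordinatewise by $\Phi_u(x)=(\phi_u(x(1)),\dots,\phi_u(x(d)))$. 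First I would check that each $\Phi_u$ is a weak homomorphism: an edge of $K_q^d$ joins two vertices differing in exactly one coordinate, so their $\Phi_u$-images differ in at most one coordinate and hence are equal or adjacent in $K_2^d$. Then I would show that $\calF=\{\Phi_u:u\in U\}$ is a $K_2^d$-realiser of $K_q^d$: given distinct non-adjacent $x,y$, their Hamming distance is at least $2$, so pick coordinates $j\ne k$ with $x(j)\ne y(j)$ and $x(k)\ne y(k)$; since the labelling is injective, Definition~\ref{definitionDoubleDistinguishing} applied to $S_{x(j)},S_{y(j)}$ and $S_{x(k)},S_{y(k)}$ yields $u\in(S_{x(j)}\sdiff S_{y(j)})\cap(S_{x(k)}\sdiff S_{y(k)})$, and for this $u$ the images $\Phi_u(x),\Phi_u(y)$ differ in coordinates $j$ and $k$, hence are distinct and non-adjacent in $K_2^d$. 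This gives $\hdim(K_q^d,K_2^d)\le|U|=\ceil{10\log q}$.

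\textbf{Lower bound, reduction.} Since $d\ge2$, fixing all but two coordinates exhibits $K_q^2$ as an induced subgraph of $K_q^d$, and restricting a minimum $K_2^d$-realiser of $K_q^d$ to this copy gives a $K_2^d$-realiser $\{g_1,\dots,g_k\}$ of $K_q^2$ with $k\le\hdim(K_q^d,K_2^d)$. Now $K_q^2$ is isomorphic to the line graph $L(K_{q,q})$, for which $\boxicity(K_q^2)\ge\log q$ is the bound from \cite{basavaraju2012pairwise} that also underlies the lower bound in Theorem~\ref{theoremHamming}. Hence it suffices to prove $\boxicity(K_q^2)\le 2k$.

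\textbf{Key claim.} The heart of the argument is the claim that the image of any weak homomorphism $g\colon K_q^2\to K_2^d$ induces in $K_2^d$ a graph of boxicity at most $2$. To see this, observe that each ``row'' clique $\{(a,b):b\in[q]\}$ and each ``column'' clique of $K_q^2$ maps under $g$ to a clique or single vertex of $K_2^d$, which (as $K_2^d$ is triangle-free) is an edge or a vertex. Writing $\alpha_a,\beta_b$ for these row- and column-images, we have $\mathrm{im}(g)=\bigcup_a\alpha_a=\bigcup_b\beta_b$, and the families $\{\alpha_a\}$, $\{\beta_b\}$ are cross-intersecting since $g(a,b)\in\alpha_a\cap\beta_b$. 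If some vertex lies in every $\alpha_a$ (or in every $\beta_b$), then $\mathrm{im}(g)$ induces a star, of boxicity $1$. Otherwise, a short argument in the triangle-free graph $K_2^d$ — a pairwise-intersecting family of edges has a common vertex, and two disjoint edges admit at most two ``transversal'' edges before a triangle is created — forces $\{\alpha_a\}$ (say) to contain two disjoint edges and hence $\{\beta_b\}$ to comprise at most two edges, so $\mathrm{im}(g)$ spans at most four vertices and has boxicity at most $\floor{4/2}=2$ by Roberts' bound.

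\textbf{Conclusion and expected obstacle.} Granting the claim, for each $g_i$ take a box representation of $H_i:=K_2^d[\mathrm{im}(g_i)]$ by $\boxicity(H_i)\le2$ interval graphs $I^i_1,\dots$ and pull each back along $g_i$, declaring $u\sim v$ in the pullback iff $g_i(u)=g_i(v)$ or $g_i(u),g_i(v)$ are adjacent in $I^i_j$; assigning to $u$ the interval of $g_i(u)$ shows each pullback is an interval graph, each is a supergraph of $K_q^2$ because $g_i$ is a weak homomorphism, and every non-edge of $K_q^2$ is non-adjacent in some pullback because it is separated by some $g_i$. By Lemma~\ref{lemmaRoberts}, $\boxicity(K_q^2)\le\sum_i\boxicity(H_i)\le2k$, so with $\boxicity(K_q^2)\ge\log q$ we get $k\ge\half\log q$, as required. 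The delicate point, and the place I expect to spend the most effort, is the boxicity-at-most-$2$ claim for images of weak homomorphisms — in particular pinning the constant down to exactly $2$ (so that the final constant is $\half$) rather than to some larger absolute constant, which is the one spot where the structure of cross-intersecting edge families in the triangle-free hypercube is genuinely used; everything else is routine bookkeeping.
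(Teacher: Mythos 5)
Your proof is correct. The upper bound is essentially the paper's own argument: the same double distinguishing family over a universe of size $\ceil{10\log q}$, the same coordinatewise weak homomorphisms, and the same use of Definition~\ref{definitionDoubleDistinguishing} on the two coordinates where $x$ and $y$ differ. The lower bound, however, is where you genuinely diverge, and for the better. The paper restricts to the induced $K_q^2 \cong L(K_{q,q})$, quotes $\boxicity(K_q^2)\geq\log q$, and then asserts that the bound ``follows from Lemma~\ref{lemmaRealiser} and the easy fact that $\boxicity K_2^2=2$'' --- but Lemma~\ref{lemmaRealiser} as stated involves $\boxicity(K_2^d)$, not $\boxicity(K_2^2)$, so taken literally it only yields $\hdim \geq \log q/\boxicity(K_2^d)$, which is far weaker. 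What is implicitly needed, and what you actually prove, is that each weak homomorphism in the restricted realiser has an image whose induced subgraph in $K_2^d$ has boxicity at most $2$; your structural argument (row- and column-cliques map to edges or vertices of the triangle-free hypercube, the two families are cross-intersecting, so the image is either a star or spans at most four vertices) establishes exactly this, and your pullback construction is the refinement of Lemma~\ref{lemmaRealiser} that replaces $\boxicity(H)$ by $\max_i\boxicity(H[\mathrm{im}(g_i)])$. I checked the details you sketch: in the no-common-vertex case all $\alpha_a,\beta_b$ must be genuine edges, triangle-freeness forces two disjoint $\alpha$'s, and the transversal edges of two disjoint edges in a triangle-free graph form a matching of size at most $2$, so Roberts' $\floor{n/2}$ bound gives boxicity $2$. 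So your version costs more work but actually closes a gap the paper leaves open; the paper's approach buys brevity at the price of an unjustified step.
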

\begin{proof}
The two-dimensional Hamming graph $K_q^2$ is an induced subgraph of $K_q^d$. It is easy to check that $K_q^2$ is isomorphic to the line graph of $K_{q,q}$, the complete bipartite graph with $q$ vertices on each part. It was shown in \cite{basavaraju2012pairwise}, that the boxicity of the line graph of $K_{q,q}$ is at least $\log q$. Hence the lower bound follows from Lemma \ref{lemmaRealiser} and the easy fact that $\boxicity K_2^2 = 2$.

Let $n = \ceil{10 \log q}$ and $U = [n]$. Since $c^{10 \log q} \geq q$, where $c= (4/3)^{1/4}$, by Lemma \ref{lemmaDoubleDistinguishing}, we know that there exists a double distinguishing family $\calD = \{S_1, \ldots, S_q\}$ in $2^U$.

Consider the family of of $n$ functions $f_u : [q] \into [2], u \in U$, where $f_u(x) = 1$ if $u \in S_x$ and $2$ otherwise. Extend each $f_u$ to a function $F_u : [q]^d \into [2]^d$ by $F_u = (f_u, \ldots, f_u)$. We claim that $\calF = \{F_u \}_{u \in U}$ is a $K_2^d$-realiser of $K_q^d$.

It is easy to see that each $F_u$ is a weak homomorphism from $K_q^d$ to $K_2^d$. Hence it suffices to show that for every pair of non-adjacent vertices $x, y \in V(K_q^d)$, there exists an $F_u, u \in U$, such that $F_u(x)$ and $F_u(y)$ are distinct and non-adjacent in $K_2^d$. Since $x$ and $y$ are non-adjacent in $K_q^d$, they are different in at least two coordinates, say $i$ and $j$, $i \neq j$. Let $A = S_{x(i)}$, $A' = S_{y(i)}$, $B = S_{x(j)}$, $B' = S_{y(j)}$. Since $i$ and $j$ are positions where $x$ and $y$ differ, we have $A \neq A'$ and $B \neq B'$. Since $\calD$ is double distinguishing, we have some $u \in U$ such that $u \in (A \sdiff A') \cap (B \sdiff B')$. Hence $f_u(x(i)) \neq f_u(y(i))$ and $f_u(x(j)) \neq f_u(y(j))$. So $F_u(x)$ and $F_u(y)$ differ in at least two coordinates and hence are distinct and non-adjacent in $K_2^d$.
\end{proof}

Completing the proof of Theorem \ref{theoremHamming} is now easy.

\begin{restatement}[Proof of Theorem \ref{theoremHamming}]
Let $K_q^d$ be the d-dimensional Hamming graph on the alphabet $[q]$ and let $K_2^d$ be the d-dimensional hypercube. Then for $d \geq 2$,
\[ 
\begin{array}{rcccl}
\log q &\leq & 
	boxicity(K_q^d) & \leq & 
	\ceil{10 \log q} \boxicity(K_2^d), \AND \\
\log q &\leq & 
	cubicity(K_q^d) & \leq & 
	\ceil{10 \log q} \cubicity(K_2^d).
\end{array}
\]
\end{restatement}
\begin{proof}
The upper bounds follows from Lemmata \ref{lemmaRealiser} and \ref{lemmaHamming}. Once we note that $K_q^2$ is isomorphic to the line graph of a complete bipartite graph, the lower bounds follow from Corollary $27$ in \cite{basavaraju2012pairwise} which is a result on boxicity of line graphs of complete bipartite graphs.

\end{proof}

%%%%%%%%%%%%%%%%%%%%%%%%%%%%%%%%%%%%%%%%%%%%%%%%%%%%%%%%%%%%%%%%%%%%%%%%%%%%%%%%
\subsection{Direct products}
%%%%%%%%%%%%%%%%%%%%%%%%%%%%%%%%%%%%%%%%%%%%%%%%%%%%%%%%%%%%%%%%%%%%%%%%%%%%%%%%

% An easy upper bound for the direct product of graphs can be obtained by observing that the direct product of a set of graphs can be obtained from the strong product of the same set of graphs by removing all the layer edges. This observation together with the fact that the layer edges of the strong product are always across colour classes of the corresponding component graph. 

\begin{restatement}[Proof of Theorem \ref{theoremDirectStrong}]
For graphs $G_1, \ldots, G_d$, 
\begin{eqnarray*}
\boxicity(\direct_{i=1}^d G_i) & \leq & 
	\boxicity(\strong_{i=1}^d G_i) + 
	\boxicity(\direct_{i=1}^d K_{\chi_i})
	\textnormal{ and} \\
\cubicity(\direct_{i=1}^d G_i) & \leq & 
	\cubicity(\strong_{i=1}^d G_i) + 
	\cubicity(\direct_{i=1}^d K_{\chi_i})
\end{eqnarray*}
where $\chi_i$ denotes the chromatic number of $G_i, i \in [d]$.
\end{restatement}
\begin{proof}
Let $G_{\direct} = \direct_{i=1}^d G_i$, $G_{\strong} = \strong_{i=1}^d G_i$ and $K_{\direct} = \direct_{i=1}^d K_{\chi_i}$. Let $b_s = \boxicity(G_{\strong})$ and $b_{\chi} = \boxicity(K_{\direct})$. Furthermore, let $f_s$ and $f_{\chi}$ be $b_s$-box and $b_{\chi}$-box representations of $G_{\strong}$ and $K_{\direct}$, respectively. Finally, let $c_i : V(G_i) \into [\chi_i]$ be a proper colouring of $G_i, i \in [d]$. It is easy to see that $f$ defined by $f((v_1, \ldots, v_d)) = f_s((v_1, \ldots, v_d)) \times f_{\chi}((c_1(v_1), \ldots, c_d(v_d)))$, is a $(b_s + b_{\chi})$-box representation for $G_{\direct}$. The case for cubicity is also similar.
\end{proof}

\begin{restatement}[Proof of Theorem \ref{theoremBoxicityCompleteDirect}]
Let $q_i \geq 2$ for each $i \in [d]$. Then,
\[
\begin{array}{rcccl}
\frac{1}{2} \sum_{i=1}^d (q_i - 2) &\leq& 
	\boxicity\left(\direct_{i=1}^d K_{q_i} \right) &\leq& 
	\sum_{i=1}^d q_i, \AND \\
\frac{1}{2} \sum_{i=1}^d (q_i - 2) &\leq& 
	\cubicity\left(\direct_{i=1}^d K_{q_i} \right) &\leq& 
	\sum_{i=1}^d q_i \log (n/q_i),
	%\sum_{i=1}^d q_i \sum_{j \in [d] \setminus \{i\}}\log q_j.
\end{array}
\]
where $n = \Pi_{i=1}^d q_i$ is the number of vertices in $\direct_{i=1}^d K_{q_i}$.
\end{restatement}
\begin{proof}
Let $G = \direct_{i=1}^d K_{q_i}$ and $q = \sum_{i=1}^d q_i$. Label the vertices of each $K_{q_i}$ with $[q_i]$ so that $V(G) = [q_1] \times \cdots \times [q_d]$. 

First we show the upper bound for boxicity. Let $V_{i,j} = \{(v_1, \ldots, v_d) \in V(G) : v_i = j \}$ for $i \in [d]$ and $j \in [q_i]$ for each $i$. For each $V_{i,j}$, construct an interval graph $I_{i,j}$ in which the vertices in $V_{i,j}$ are mapped to mutually disjoint intervals on $\R$ and every other vertex is mapped to the universal interval $\R$. Next we show that $G = \cap_{i \in [d]} \cap _{j \in [q_i]} I_{i,j}$, from which the theorem follows. 

It is easy to see that each $V_{i,j}$ is an independent set since every vertex in $V_{i,j}$ have the same $i$-th component. Hence every interval graph $I_{i,j}$ is a supergraph of $G$. Let $u, v \in V(G)$ be two distinct non-adjacent vertices in $G$. Then, since they are non-adjacent, they agree in some component, say the $i$-th. Hence both the vertices belong to $V_{i,j}$ where $j$ is their common value in $i$-th component. Hence $G = \cap_{i \in [d]} \cap _{j \in [q_i]} I_{i,j}$.

Next, we show the lower bound for boxicity. For each $i \in [d]$, set 
\[
\begin{array}{rcl}
A_i &=& \{(a_1, \ldots, a_d) \in V(G) : 
	a_i > 2 \AND a_j = 1 \IF j \neq i \}, 
	\quad \textnormal{and} \\
B_i &=& \{(b_1, \ldots, b_d) \in V(G) : 
	b_i > 2 \AND b_j = 2 \IF j \neq i \}.
\end{array}
\]
 
Also set $A = \cup_{i \in [d]} A_i$ and $B = \cup_{i \in [d]} B_i$. Note that $|A_i| = |B_i| = q_i - 2$. When $d = 2$, it is easy to see that $G[A \cup B] = G[A_1 \cup B_1] \otimes G[A_2 \cup B_2]$ and that $G[A_i \cup B_i], i \in [2]$, are crown graphs, that is, a complete bipartite graph with a perfect matching removed. Hence $\boxicity(G[A_i \cup B_i]) = (q_i - 2)/2$ and hence by Observation \ref{observationJoin}, $\boxicity(G[A \cup B]) = \frac{1}{2} \sum_{i=1}^2 (q_i -2)$. When $d > 2$, $G[A \cup B]$ is a crown graph with parts $A$ and $B$. Hence $\boxicity(G[A \cup B]) = \frac{1}{2}|A| =  \frac{1}{2} \sum_{i=1}^d (q_i -2)$. In either case, the lower bound now follows easily from Observation \ref{observationInducedSubgraph}.

Each of the interval graph $I_{i,j}$ can be represented as an intersection graph of \\ $\ceil{\log |V_{i,j}|}=\ceil{\log \Pi_{k \neq i} q_k}$ unit interval graphs. Hence the upper bound on cubicity. The lower bound on  cubicity follows since it cannot be lower than the boxicity.
\end{proof}

\end{appendix}
\end{document}